\newenvironment{declaration}{\noindent\textbf{Declaration and Statements}}{}
\newtheorem{theorem}{Theorem}[section]
\newtheorem{proposition}[theorem]{Proposition}
\newtheorem{definition}[theorem]{Definition}
\newtheorem{corollary}[theorem]{Corollary}
\newtheorem{lemma}[theorem]{Lemma}
\newtheorem{remark}[theorem]{Remark}
\newtheorem{ithm}{Theorem}[section]
\DeclareMathOperator{\Ricci}{Ric}
\def\bb#1{\mathbb{#1}}
\def\lie#1{\mathfrak{#1}}
\newcommand{\h}{\frac{1}{2}}
\DeclareMathOperator{\Ad}{Ad}
\newcommand{\ga}{\mathsf{g}}
\renewcommand{\email}[2][]{%
	\ifx\emails\@empty\relax\else{\g@addto@macro\emails{,\space}}\fi%
	\@ifnotempty{#1}{\g@addto@macro\emails{\textrm{(#1)}\space}}%
	\g@addto@macro\emails{#2}%
}
\title[On the dynamics of positively curved metrics on $\mathrm{SU}(3)/\mathrm{T}^2$ under the Ricci flow]{On the dynamics of positively curved metrics on $\mathrm{SU}(3)/\mathrm{T}^2$ under the homogeneous Ricci flow}
\author{Leonardo F. Cavenaghi}
\address{Instituto de Matemática, Estatística e Computação Científica, Universidade Estadual de Campinas - Unicamp\\ Rua Sérgio Buarque de Holanda, 651, 13083-859, Campinas, SP, Brazil}
\email{leonardofcavenaghi@gmail.com}
\author{Lino Grama}
\address{Instituto de Matemática, Estatística e Computação Científica, Universidade Estadual de Campinas - Unicamp\\ Rua Sérgio Buarque de Holanda, 651, 13083-859, Campinas, SP, Brazil}
\email{lino@ime.unicamp.br}
\author{Ricardo M. Martins}
\address{Instituto de Matemática, Estatística e Computação Científica, Universidade Estadual de Campinas - Unicamp\\ Rua Sérgio Buarque de Holanda, 651, 13083-859, Campinas, SP, Brazil}
\email{RMiranda@unicamp.br}
\begin{document}
\subjclass[2020]{}
\keywords{}

\begin{abstract}
In this note, we show that the classical Wallach manifold $\mathrm{SU}(3)/\mathrm{T}^2$-admits metrics of positive intermediate Ricci curvature $(\Ricci_d >0)$ for $d = 1, 2, 3, 4, 5$ that lose these properties under the homogeneous Ricci flow for $d=1, 2, 3, 5$. We make the same analyses to the family of Riemannian flag manifolds $\mathrm{SU}(m+2p)/\mathrm{S}(\mathrm{U}(m)\times\mathrm{U}(p)\times \mathrm{U}(p))$, concluding similar results. These explicitly verify some claims expected to be true among experts (see \cite{BW}) for positive Ricci curvature and intermediate positive Ricci curvature. Our technique is only possible due to the global behavior understanding of the homogeneous Ricci flow for invariant metrics on these manifolds.
\end{abstract}
\maketitle

\section*{Declaration}
\begin{declaration}
All authors declare that they have no conflicts of interest.

Our manuscript has no associated data.
\end{declaration}

\section{Introduction}
Nowadays, an exciting research topic is guided by the search for property maintenance of geometric quantities under the \emph{Ricci flow}. Particularly to the sub-field of positive curvatures, the inaugural work of B\"ohm and Wilking \cite{BW} establishes the non-maintenance of positive Ricci curvature on a certain $12$-dimensional homogeneous space under the Ricci flow. Several works appear analyzing this and related phenomena, \cite{Bettiol2022,bamler2019,10.2307/41059321,wanshi}.

On the other hand, the Ricci flow is feasible to handle (due to better-established techniques) in the homogeneous setting, \cite{proj-ricci-flow,annalsglobal,gm12,gm09,wz}. Of particular interest is the recent paper \cite{proj-ricci-flow}, substantiated in the following:  It consists in appropriately normalizing the Ricci flow to a simplex followed by a time reparametrization to obtain polynomial equations, leading to a \emph{projected Ricci flow}. This arises from a natural generalization of the standard unit-volume reparametrization of the Ricci flow. See, for instance, Theorem \ref{thm-rescaling}. As expected by the authors of \cite{proj-ricci-flow}, such an approach works well to study the dynamics of some positively curved metrics on certain homogeneous spaces. We state our results before going a bit deeper into the machinery employed.

As the first result, Theorem \ref{thm:casobase} bellow is related to Remark 3.2 in \cite{BW}, where authors claim that there exists an invariant metric with positive Ricci curvature on the flag manifold $\mathrm{SU}(3)/\mathrm{T}^2$ that evolves under the homogeneous Ricci flow to a metric with mixed Ricci curvature. Taking this into account jointly with the fact that preservation of positive curvature conditions under \emph{backward Ricci flow} is generally not expected, it should not be taken as a surprise to have the following
\begin{ithm}[=Theorem \ref{thm:basiccase}]\label{thm:casobase}
        There exists an invariant metric on $\mathrm{SU}(3)/\mathrm{T}^2$ with positive Ricci curvature that evolves into an invariant metric of mixed Ricci curvature under the backward homogeneous Ricci flow. 
\end{ithm}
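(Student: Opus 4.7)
The plan is to work with the three-parameter family of $\mathrm{SU}(3)$-invariant metrics on $\mathrm{SU}(3)/\mathrm{T}^2$, parameterized by a triple $(x_1,x_2,x_3)\in\bb R^3_{>0}$ coming from the decomposition of the isotropy representation into three inequivalent two-dimensional summands $\lie m_1\oplus\lie m_2\oplus\lie m_3$ satisfying the cyclic bracket relation $[\lie m_i,\lie m_j]\subset\lie m_k$. First I will write down the Ricci components $r_i(x_1,x_2,x_3)$ using the standard formula for this naturally reductive data, and identify the open region $R_+\subset\bb R^3_{>0}$ where all three $r_i>0$. A convenient initial point lies near the normal Einstein metric $(1,1,1)$, where the three $r_i$ coincide and are strictly positive; by continuity, an entire open neighborhood of this point is contained in $R_+$.

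Next, I will invoke Theorem~\ref{thm-rescaling} to rescale the homogeneous Ricci flow into a polynomial vector field on the two-dimensional simplex in $\bb R^3_{>0}$. The orbits of this projected flow parameterize the homogeneous Ricci flow trajectories up to rescaling and time-reparametrization, so the backward Ricci flow corresponds to reversing time in the polynomial flow. The equilibria on the simplex are the (rescaled) invariant Einstein metrics of $\mathrm{SU}(3)/\mathrm{T}^2$, namely the normal Einstein point at the barycenter and the three K\"ahler--Einstein points; being two-dimensional and polynomial, the system is amenable to a complete phase-portrait analysis via linearization, invariant edges, and symmetry slices.

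Third, I will fix a specific initial metric $g_0\in R_+$ close to but distinct from the normal Einstein metric, placed inside the invariant slice $x_1=x_2$, which is preserved by the flow by virtue of the $\mathrm{S}_3$-symmetry permuting the summands $\lie m_i$. On this slice the projected system reduces to a planar ODE in $(t,s)$ with $t=x_1=x_2$ and $s=x_3$, and a direct inspection of $r_3(t,t,s)$ reveals an explicit curve where $r_3=0$ separating $R_+$ from the mixed-Ricci region. The remaining task is then to verify that the backward trajectory from $g_0$ crosses this curve in finite backward time, yielding an invariant metric with $r_1=r_2>0$ and $r_3<0$, i.e.\ of mixed Ricci curvature.

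The hard part will be ruling out that the backward trajectory is trapped inside $R_+$ by a local equilibrium, a periodic orbit, or a heteroclinic connection. To handle this I will combine the linearization at the normal Einstein equilibrium—which forces nearby backward orbits to leave every compact subneighborhood along the unstable directions—with a Lyapunov-type argument based on a normalized scalar curvature or volume functional that is monotone along the projected flow. This monotonicity excludes periodic orbits in the interior of the slice and forces the backward trajectory from $g_0$ to exit every compact subset of the open simplex; together with the explicit location of the zero locus $\{r_3=0\}$ between the Einstein equilibrium and the degenerate edge of the simplex, this will force the orbit to cross $\{r_3=0\}$ transversally and thereby enter the mixed-Ricci region, completing the argument.
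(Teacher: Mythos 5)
Your overall strategy---projecting the homogeneous Ricci flow to the simplex, restricting to the invariant slice $x_{1}=x_{2}$, and locating the sign change of a Ricci component along that slice---is exactly the paper's route. However, there is a genuine gap in your choice of initial metric. You propose to start at a point of the slice close to the normal Einstein metric $(1,1,1)$, i.e.\ near $B=(1/3,1/3)$ on the simplex, and you claim that the linearization there ``forces nearby backward orbits to leave every compact subneighborhood along the unstable directions.'' This is backwards: $B$ is a \emph{repeller} (source) for the forward projected flow, hence a \emph{sink} for the backward flow. Backward trajectories starting near $B$ converge to $B$ and never leave a small neighborhood of it, which lies entirely inside the positive-Ricci region; along the slice, a point $(t_{0},t_{0})$ with $t_{0}\in\,]1/4,1/3[$ flows backward monotonically to $B$ and so retains $\Ricci>0$ for all backward time. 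No Lyapunov or monotonicity argument can rescue this starting point.

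The correct choice is to start on the other side of the K\"ahler--Einstein saddle $A=(1/4,1/4)$: take $\ga_{0}=(t_{0},t_{0},1-2t_{0})$ with $1/8<t_{0}<1/4$. On the invariant segment joining the origin to $A$ the forward flow is attracted to $A$, so the backward flow decreases $t$ and, since the vector field is bounded away from zero on $[1/8,t_{0}]$, reaches $t<1/8$ in finite backward time. There the metric has mixed Ricci curvature because $r_{x}=r_{y}<0$ for $0<t<1/8$ while $r_{z}>0$ on all of $]0,1/2[$. Note also that you have the roles of the components reversed: on this slice it is $r_{x}=r_{y}$ (not $r_{3}=r_{z}$) that changes sign, and the zero locus is the single value $t=1/8$, so no transversality or trapping analysis is needed once the correct segment and base point are identified.
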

Quite recently, in \cite{davide}, the authors show the non-maintenance of positive intermediate curvatures under the homogeneous Ricci flow for a family of homogeneous spaces which encompass $M^{12} = \mathrm{Sp}(3)/\mathrm{Sp}(1)\times \mathrm{Sp}(1)\times \mathrm{Sp}(1)$, this one appearing in Theorem C in \cite{BW}. Such a work served as a particular motivation for this note. Throughout this paper, we deal with two different notions of positive intermediate Ricci curvature. As the first, we remark that the sub-index $d$ in Theorem \ref{ithm:basiccase2} stands for the concept of \emph{$d\mathrm{th}$-intermediate positive Ricci curvature}, Definition \ref{def:notiononintro0} below.

\begin{definition}\label{def:notiononintro0}
We say that a Riemannian manifold $(M,\ga)$ has positive $d\mathrm{th}$-Ricci curvature if for every $p\in M$ and every choice of non-zero $d+1$-vectors $\{v,v_1,\ldots,v_d\}$ where $\{v_1,\ldots,v_d\}$ can be completed to generated an orthonormal frame in $T_pM$, it holds $\Ricci_d(v) := \sum_{i=1}^dK_{\ga}(v,v_i) > 0$. For an $n$-dimensional manifold, these curvature conditions interpolate between positive sectional curvature ($d=1$) and positive Ricci curvature ($d=n-1$).  
\end{definition}

\begin{ithm}[=Theorem \ref{thm:basiccase2}+Proposition \ref{prop:riccinterd}]\label{ithm:basiccase2}
     For each $d = 1, 2, 3, 4$, there exists an invariant metric on $\mathrm{SU}(3)/\mathrm{T}^2$ with $\Ricci_d > 0$. Moreover, for $d = 1, 2, 3$, these evolve into invariant metrics satisfying $\Ricci_d(X) \leq 0$ for some non-zero $X$ under the homogeneous Ricci flow. 
\end{ithm}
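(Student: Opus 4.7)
The plan is to exploit the well-known parametrization of invariant metrics on $\mathrm{SU}(3)/\mathrm{T}^2$ by three positive numbers $(x_1,x_2,x_3)$ corresponding to the three inequivalent irreducible isotropy summands $\mathfrak{m}_1\oplus\mathfrak{m}_2\oplus\mathfrak{m}_3$. In this decomposition the sectional curvatures $K_{ij}$ of planes spanned by unit vectors chosen from $\mathfrak{m}_i,\mathfrak{m}_j$ (and of the ``mixed'' two-planes) admit explicit closed-form expressions as rational functions of $(x_1,x_2,x_3)$; these formulas are standard in the homogeneous Ricci-flow literature and serve as the common input for all computations below.

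First I would settle the existence half. For $d=1$ this is the classical Wallach observation: parameters lying inside the Wallach triangle produce metrics of positive sectional curvature. Since $\Ricci_d$ is a sum of two-plane sectional curvatures of planes containing a common vector, positive sectional curvature immediately forces $\Ricci_d>0$ for all $d$, so each Wallach metric serves simultaneously as a witness for $d=1,2,3,4$. Alternatively, for $d=4$ one can also stay close to the known Ricci-positive metrics and open the set of admissible parameters. In either case existence is obtained by producing an explicit triple $(x_1,x_2,x_3)$ and verifying the required sign inequalities on the finitely many $K_{ij}$.

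For the loss of $\Ricci_d$ under the homogeneous Ricci flow with $d\in\{1,2,3\}$, I would apply Theorem~\ref{thm-rescaling} to pass to the projected Ricci flow, turning the evolution on the cone of invariant metrics into a polynomial vector field on a $2$-simplex. Starting from a point inside the intersection of the Wallach region with this simplex, I would follow the trajectory and locate a first time $t_\ast$ at which a specific sum $\sum_{i=1}^{d}K(X,Y_i)$ ceases to be positive. Because the isotropy has three summands, it suffices to test frames $\{Y_1,\dots,Y_d\}$ whose members are orthonormal vectors lying in the $\mathfrak{m}_i$'s together with $X$ in one of the $\mathfrak{m}_j$'s, so the sign analysis reduces to a finite combinatorial check over the possible choices of $d$ summand-indices. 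The qualitative input that makes this work is the global description of the flow trajectories on $\mathrm{SU}(3)/\mathrm{T}^2$ coming from the projected flow, which guarantees that solutions starting inside the Wallach region eventually approach the boundary of the positivity locus along predictable directions.

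The main obstacle is precisely this last step: while monitoring each $K_{ij}$ along a trajectory is straightforward, controlling $\Ricci_d(X)$ for \emph{arbitrary} tangent vectors $X$ would in principle require optimizing over a Grassmannian of $d$-planes. The saving observation is that to prove the theorem one only needs a single witness frame for which $\Ricci_d\le 0$; the three-summand block structure of the curvature operator on an invariant metric makes such a witness accessible by a case analysis on the summand-indices of $X$ and the $Y_i$. Once this combinatorial reduction is in place, the proof becomes a finite verification carried out at the time $t_\ast$ predicted by the projected Ricci flow, and the distinct behavior for $d=1,2,3$ versus $d=4$ is traced to which sums of curvatures the flow is able to push below zero before the trajectory escapes the relevant region of the simplex.
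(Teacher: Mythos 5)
Your overall strategy---the three-summand parametrization $(x_1,x_2,x_3)$, the projected Ricci flow on the simplex, and the combinatorial reduction of $\Ricci_d$ to sums of the finitely many $K_{ij}$---is the same as the paper's, and your existence argument (positive sectional curvature on the Wallach region forces $\Ricci_d>0$ for all $d$) is a legitimate shortcut for the first half of the statement. The gap is in the second half. You assert that the global description of the projected flow ``guarantees that solutions starting inside the Wallach region eventually approach the boundary of the positivity locus along predictable directions,'' and you propose to locate a first time $t_\ast$ where some $\Ricci_d$ ceases to be positive. But nothing in your argument establishes that such a $t_\ast$ exists: a trajectory starting at a $\Ricci_d$-positive metric could in principle remain in the positivity locus for all time, and indeed the paper exhibits (in a remark after Theorem \ref{thm:basiccase}) an invariant family $(\tfrac12,t,\tfrac12-t)$ of Ricci-positive metrics that \emph{never} develops mixed Ricci curvature. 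So the choice of trajectory is not a matter of convenience; it is the content of the proof, and ``a point inside the Wallach region'' is not a sufficient specification.

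What the paper supplies, and what you are missing, is the following concrete chain: (i) the submersion line $x=y$ is invariant under the projected flow (Lemma \ref{lem:invariantsegment}), reducing the dynamics to the one-parameter family $(t,t,1-2t)$; (ii) an explicit computation (Proposition \ref{prop:riccinterd}) showing $\Ricci_d>0$ on this family exactly for $t\in\left]\tfrac{3}{10},\tfrac12\right[$ when $d=1,2,3,4$, with a null direction appearing at $t=\tfrac{3}{10}$ for $d=1,2,3$; and (iii) the identification of the forward $\omega$-limit of the segment $t\in\left]\tfrac14,\tfrac13\right[$ as the K\"ahler--Einstein saddle $(1/4,1/4)$, i.e.\ the metric $(1,1,2)$, which lies at $t=\tfrac14<\tfrac{3}{10}$ and hence fails $\Ricci_d>0$ for $d\le 3$. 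Since the source of the segment is the repelling normal metric at $t=\tfrac13$, the forward flow strictly decreases $t$ through the threshold $\tfrac{3}{10}$ in finite time, which is what actually produces the witness $X$ with $\Ricci_d(X)\le 0$. Without identifying the invariant segment, the limit point, and the sign of $\Ricci_d$ at that limit point, your ``finite verification at time $t_\ast$'' has nothing to verify. Note also that the threshold computation in (ii) cannot be bypassed by the positive-sectional-curvature shortcut you use for existence: you need to know precisely where along the trajectory positivity fails, which requires the explicit formulas for $\Ricci_d(X_i)$ on the family $(t,t,1-2t)$.
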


 In \cite{wallachrelated, Abiev2016}, the authors prove that metrics with positive sectional curvature in $\mathrm{SU}(3)/\mathrm{T}^2$ evolve to metrics with planes of negative sectional curvature, thus establishing part of our Theorem \ref{thm:casobase}. Concerning Ricci curvature, \cite{wallachrelated} deals with the signature change of the Ricci tensor (Theorem 4.1). Related to it, we derive analogous results to a broader family, being our second intermediate positive Ricci curvature notion here considered:

 \begin{definition}
     Fix a  $n$-dimensional Riemannian manifold $(M,\ga)$. The Ricci tensor of $(M,\ga)$ is said to be \emph{$d$-positive} if the sum of the $d$ smallest eigenvalues of the Ricci tensor is positive at all points. This condition interpolates between positive Ricci curvature and positive scalar curvature.
 \end{definition}

\begin{ithm}[=Theorem \ref{thm:generalfamily}]\label{ithm:generalfamily}
 There exists an invariant metric on $\mathrm{SU}(m+2p)/\mathrm{S}(\mathrm{U}(m)\times \mathrm{U}(p)\times \mathrm{U}(p)), m\geq p\geq 1$ for which its Ricci tensor is $d$-positive for $d\in \left\{1,\ldots,4mp + 2p^2\right\}$. Moreover, the backward homogeneous Ricci flow loses this property in finite time.
\end{ithm}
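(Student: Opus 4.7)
The plan is to exploit the three-summand isotropy structure of $G/K=\mathrm{SU}(m+2p)/\mathrm{S}(\mathrm{U}(m)\times \mathrm{U}(p)\times \mathrm{U}(p))$. The isotropy representation decomposes as $\mathfrak{m}=\mathfrak{m}_1\oplus \mathfrak{m}_2\oplus \mathfrak{m}_3$ into pairwise inequivalent irreducible summands of real dimensions $d_1=d_2=2mp$ and $d_3=2p^2$, whose sum $4mp+2p^2$ is $\dim G/K$. Each $G$-invariant metric is therefore determined by a triple $(x_1,x_2,x_3)\in\mathbb{R}_{>0}^3$ of positive multiples of $-B|_{\mathfrak{m}_i}$, where $B$ is the Killing form. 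First I would write out the Ricci components $r_i(x_1,x_2,x_3)$ via the Wang--Ziller formula in terms of the single non-trivial structure constant $[123]$; the Ricci tensor acts as $r_i\,\mathrm{Id}$ on each $\mathfrak{m}_i$, so its spectrum is $\{r_1,r_2,r_3\}$ with multiplicities $d_i$.

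Next I would exhibit a concrete invariant metric with $\Ricci>0$. Since $G/K$ is a generalised flag manifold of $A$-type, it admits a K\"ahler--Einstein metric with positive Einstein constant, for which the three $r_i$ are strictly positive; equivalently one may use a suitable normal metric. At such a metric every eigenvalue of the Ricci tensor is positive at every point, so the sum of the $d$ smallest eigenvalues is positive for every $d\in\{1,\ldots,4mp+2p^2\}$, which settles the first assertion.

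For the backward flow I would apply the projected Ricci flow of Theorem \ref{thm-rescaling} to recast the dynamics as a polynomial vector field on the $2$-simplex $\Delta^2\subset \mathbb{R}^3$, whose equilibria are the invariant Einstein metrics up to homothety. Direct inspection of the Wang--Ziller formula shows that $r_i$ becomes negative as soon as a ratio $x_j/(x_ix_k)$ with $j\neq i$ is large enough, and that such ratios blow up near $\partial \Delta^2$. Choosing the initial datum to be a small perturbation of the K\"ahler--Einstein point along an unstable direction of the \emph{backward} projected flow, the trajectory is driven towards $\partial \Delta^2$; combined with the time reparametrisation of Theorem \ref{thm-rescaling} this corresponds to finite real time for the unnormalised backward Ricci flow, and along the orbit some $r_i$ changes sign, causing the $d=1$ (and hence every $d$) positivity condition to fail.

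The crux of the argument is the quantitative last step: showing the backward trajectory reaches $\{r_i<0\}$ in \emph{finite} time rather than only asymptotically. I would address it by linearising the projected vector field at the K\"ahler--Einstein fixed point, which is tractable because the field is polynomial of small degree in three variables, locating an eigendirection with positive real part for the backward linearisation, and then deriving a scalar comparison inequality along the unstable manifold that forces $\min_i r_i$ to cross zero at an explicit finite time, with the constants depending polynomially on $(m,p)$ so that the conclusion holds uniformly in the family.
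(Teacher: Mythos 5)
Your setup (three inequivalent isotropy summands of dimensions $2mp,2mp,2p^2$, Ricci components $r_i$ acting as scalars on each summand, existence of a metric with all $r_i>0$, e.g.\ a K\"ahler--Einstein or normal metric) matches the paper and disposes of the first assertion correctly. The gap is in the second half. You assert that a backward orbit started at a small perturbation of the K\"ahler--Einstein point along its backward-unstable direction ``is driven towards $\partial\Delta^2$'' where some $r_i$ changes sign, but this global routing is exactly what needs proof, and for the orbit you propose it is in fact false in general. The backward-unstable manifold of the relevant K\"ahler--Einstein equilibrium lies along the ray of submersion metrics $(t,t,1-2t)$, and on that ray the projected field restricts to the one-variable equation governed by $u(t,t)=-t(1-2t)(1-4t)\bigl(m(2t-1)+p(4t-1)\bigr)$, which has interior equilibria at $t=\tfrac14$ and $t=\tfrac{m+p}{2(m+2p)}\ge\tfrac13$. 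A backward orbit leaving the K\"ahler--Einstein point at $t=\tfrac{m+p}{2(m+2p)}$ in the decreasing direction converges to the equilibrium at $t=\tfrac14$ and never reaches the region $t<\tfrac{p}{2m+6p}\le\tfrac18$ where $r_x=r_y$ become negative; so it never loses $d$-positivity. In short, a backward orbit emanating from a saddle need not go to $\partial\Delta^2$; it can be captured by another interior Einstein equilibrium, and your heuristic about ratios $x_j/(x_ix_k)$ blowing up near the boundary does not control which stratum of the boundary (if any) the orbit approaches.

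The missing ingredient that makes the argument close is the invariance of the submersion ray itself: one checks directly that $(u(t,t),v(t,t))\cdot(-1,1)=0$, so the projected flow preserves the segment $(t,t)$, and along its lift $(t,t,1-2t)$ the signs of the Ricci eigenvalues are completely explicit, namely $r_x=r_y>0$ iff $t>\tfrac{p}{2m+6p}$ and $r_z>0$ iff $0<t<\tfrac12$, with $\tfrac{p}{2m+6p}\le\tfrac18$ since $m\ge p$. Choosing the initial metric at $t_0\in\,]\tfrac18,\tfrac14[$ (strictly between the sign-change threshold and the nearest equilibrium, not at a perturbation of the K\"ahler--Einstein point at $t\ge\tfrac13$), the backward orbit decreases monotonically along the ray and crosses $t=\tfrac{p}{2m+6p}$, a regular point of the restricted one-dimensional flow, in finite time; the positive time reparametrization between the projected and the unnormalized flows is bounded on this compact arc, so finite time is preserved. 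With that lemma your ``crux'' evaporates: no linearization or comparison inequality is needed, because the difficulty was never the local escape rate from the equilibrium but the global itinerary of the orbit.
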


In a very general picture, the Ricci flow for an arbitrary Riemannian manifold $(M,\ga_0)$ is the nonlinear system of
PDEs $\partial_t\ga = -2\Ricci(\ga(t)),~\ga(0) = \ga_0$. However, for homogeneous manifolds $M$, if the flow is restricted to the set of invariant metrics on $M$, this system is reduced to an autonomous nonlinear system of ODEs. Hence, it is natural and feasible to study the Ricci flow from a qualitative point of view, using tools from the dynamical systems theory.

On the one hand, the Wallach flag manifold $W^6:= \mathrm{SU}(3)/\mathrm{T}^2$ consists in an example of a homogeneous manifold that is a \emph{flag manifold of type II}, see \cite{kimura}. For our purposes in this paper, it is sufficient to face this as: the geometry of invariant metrics on $W^6$ is completely characterized by the possible choices of three summands coming from the isotropy representation on $M=G/K$. Namely, every curvature condition here treated for every invariant metric on $W^6$ admits an explicit description in terms of three positive real numbers, say $\ga = (x,y,z)$ -- Sections \ref{sec:curvatureformulae} and \ref{sec:intermediate}.

On the other hand, as it is presented in Section \ref{sec:thericciflowhere}, the Ricci flow system for invariant metrics on $W^6$ is a system of three nonlinear ODEs (in three variables), which usually would impose more difficulty on handling their dynamics. However, as introduced in \cite{proj-ricci-flow}, we can, for $W^6$ and other flag manifolds (with three isotropy summands), reduce equivalently the dynamic analyses' to the plane, where dynamical system techniques are easier to employ -- this is what is called \emph{projected homogeneous Ricci flow}.

The proof of our results is obtained by restricting the dynamics of invariant metrics $\ga = (x,y,z)$ to the tetrahedron $x+y+z=1$ via the machinery in \cite{proj-ricci-flow}, focusing particularly on the case of \emph{Riemannian submersion metrics} $\ga(t) = (t,t,1-2t)$. Precisely, we show the family $\ga(t)$ constitutes invariant solutions to the projected homogeneous Ricci flow (Lemma \ref{lem:invariantsegment}), and since it is in hand the complete behavior of the here-treated curvature formulae in terms of $t$, the proof to our results are straightforwardly obtained. Theorem \ref{ithm:generalfamily} follows via the same method.

\section{The curvature formulae of $\mathrm{SU}(3)/\mathrm{T}^2$}
\label{sec:curvatureformulae}

In this section, we derive a very explicit family of Riemannian homogeneous metrics on the Wallach flag manifold $\mathrm{SU}(3)/\mathrm{T}^2$, which have positive intermediate and positive Ricci curvature (see Definitions \ref{def:lm} and \ref{def:minimun}). Seeking this aim, we obtain explicit expressions for the sectional curvature of each basis-generated plane of an arbitrary homogeneous metric. It is worth mentioning that the derived formulas are rather classical and appear in different presentations elsewhere. For a general account of computations to the sectional curvature of some flag manifolds (Wallach manifolds), we recommend \cite{bettiol2014}. The to-be-presented formulae play a significant role in this work due to the possibility of parameterizing every curvature notion in terms of metric components here considered; see, for instance, Section \ref{sec:intermediate}.

As can be checked, the  $\mathrm{SU}(3)/\mathrm{T}^2$ consists of a \emph{flag manifold of type II} -- see \cite{kimura}. An explicit Weyl basis to $T_o\mathrm{SU}(3)/\mathrm{T}^2$ where $o = e\mathrm{T}^2$, being $e\in \mathrm{SU}(3)$ the unit element, is obtained observing that a basis for the Lie algebra of $\mathrm{SU}(3)$ is given by
$$
     \h\mathrm{diag}(2\mathrm{i},-\mathrm{i},\mathrm{i})
     , \ \h\mathrm{A}_{12}
     , \ \h\mathrm{S}_{12}
     , \ \h\mathrm{A}_{13}
     , \ \h\mathrm{S}_{13}
     , \ \h\mathrm{diag}(0,\mathrm{i},-\mathrm{i})
     , \ \h\mathrm{A}_{23}
     , \ \h\mathrm{S}_{23},
$$
where $\mathrm{S}_{kj}$ is a symmetric matrix $3\times 3$ with $\mathrm{i}$ in inputs $kj$ and $jk$ and $0$ in the others. On the other hand, $\mathrm{A}_{jk}$ is an antisymmetric matrix $3\times 3$ that has $1$ on input $kj$ and $-1$ on input $jk$, $0$ elsewhere. Moreover, $\mathrm{i} = \sqrt{-1}$. We can extract a basis for the tangent space $T_{o}\mathrm{SU}(3)/\mathrm{T}^2$ by disregarding the matrices $\mathrm{diag}(2i,-\mathrm{i}, \mathrm{i})$ and $\mathrm{diag}(0, \mathrm{i},-\mathrm{i})$. It is also worth mentioning that the $3$ components of the isotropy representation are generated by \[\mathrm{span}_{\bb R}\left\{\h\mathrm{A}_{jk},\h\mathrm{S}_{jk}\right\}.\]

 We recall that whenever a homogeneous space $M=G/K$ is \textit{reductive}, with reductive decomposition $\mathfrak{g}=\mathfrak{k}\oplus\mathfrak{m}$ (that is, $[\mathfrak{k},\mathfrak{m}]\subset \mathfrak{m}$), then $\lie m$ is $\Ad_G(K)$-invariant. Moreover, the map $\lie g \to T_o(G/K)$ that assigns to $X \in \lie g$ the induced tangent vector \[X \cdot o = \dfrac{d}{dt}\Big|_{t=0} (\exp(tX)o)\] is surjective with kernel the isotropy subalgebra $\lie k$. Using that $g \in G$ acts in tangent vectors by its differential, we have that
\begin{equation}
\label{eq-induzido}
g( X \cdot o ) = ( \Ad(g)X ) \cdot g o.
\end{equation}
Hence, the restriction $\lie m \to T_o(G/K)$ of the above map is a linear isomorphism that intertwines the isotropy representation of $K$ in $T_o(G/K)$ with the adjoint representation of $G$ restricted to $K$ in $\lie m$. This allows us to identify $T_o(G/K) = \lie m$ and the $K$-isotropy representation with the $\Ad_G(K)$-representation.

Being $G$ a compact connected simple Lie group such that the isotropy representation of $G/K$ decomposes $\mathfrak{m}$ as 
\begin{equation}\label{deco-iso}
\mathfrak{m}=\mathfrak{m}_1\oplus \ldots \oplus \mathfrak{m}_n
\end{equation}
where $\mathfrak{m}_1,\ldots,\mathfrak{m}_n$ are irreducible pairwise non-equivalent isotropy representations, all invariant metrics are given by
\begin{align}
\label{eq-compon-metr}
\mathsf{g}_o&=x_1B_1+\ldots + x_nB_n
\end{align}
where $x_i>0$ and $B_i$ is the restriction of the (negative of the) Cartan-Killing form of $\mathfrak{g}$ to $\mathfrak{m}_i$. We also have 
\begin{equation}
\label{eq-compon-ricci}
\Ricci (\mathsf{g}_o)=y_1 B_1 + \ldots + y_nB_n
\end{equation}
where 
$y_i$ is a function of $x_1, \ldots, x_n$. In our considered example, an $\mathrm{Ad}(\mathrm{T}^2)$-invariant inner product $\ga$ is determined by three parameters $(x,y,z)$ characterized by 
\begin{align*}
     \mathsf{g}\left(\h\mathrm{A}_{12},\h\mathrm{A}_{12}\right) &= \mathsf{g}\left(\h\mathrm{S}_{12},\h\mathrm{S}_{12}\right) = x,\\
     \mathsf{g}\left(\h\mathrm{A}_{13},\h\mathrm{A}_{13}\right) &= \mathsf{g}\left(\h\mathrm{S}_{13},\h\mathrm{S}_{13}\right) = y,\\
     \mathsf{g}\left(\h\mathrm{A}_{23},\h\mathrm{A}_{23}\right) &= \mathsf{g}\left(\h\mathrm{S}_{23},\h\mathrm{S}_{23}\right) = z.
\end{align*}
We then redefine new basis to $\mathfrak{m} := T_0(\mathrm{SU}(3)/\mathrm{T}^2)$ by
\[X_1 = \frac{1}{2\sqrt{x}}\mathrm{A}_{12}, X_2 = \frac{1}{2\sqrt{x}}\mathrm{S}_{12}, X_3 = \frac{1}{2\sqrt{y}}\mathrm{A}_{13}, X_4 = \frac{1}{2\sqrt{y}}\mathrm{S}_{13},\] \[X_5 = \frac{1}{2\sqrt{z}}\mathrm{A}_{23}, X_6 = \frac{1}{2\sqrt{z}}\mathrm{S}_{23} .\]

Since the following formula holds for the sectional curvature of $\ga$ (see \cite[Theorem 7.30, p. 183]{besse} or Equation 2.1 in \cite{bettiol2014})
\begin{align*}
K(X,Y) = -\frac{3}{4}\|[X,Y]_{\mathfrak{m}}\|^2 - \frac{1}{2}\mathsf{g}([X,[X,Y]_{\lie g}]_{\mathfrak{m}},Y) - \frac{1}{2}\mathsf{g}([Y,[Y,X]_{\lie g}])_{\mathfrak{m}},X)\\
+\|U(X,Y)\|^2 - \ga(U(X,X),U(Y,Y)),
\end{align*}
\begin{equation*}
2\ga(U(X,Y),Z) = \ga([Z,X]_{\lie m},Y) + \ga(X,[Z,Y]_{\lie m})
\end{equation*}
we can set up the following table, where $C_{ij}^k$ denotes a structure constant, that is, $C_{ij}^k = \ga([X_i, X_j], X_k)$, and $K_{ij}$ the sectional curvature. Moreover, that for $(i,j)\neq (1,2), (3,4), (5,6)$ it holds that
$$
    K(X_i,X_j) = K_{ij} = -\h C_{ij}^kC_{ik}^j -\h C_{kj}^iC_{ij}^k - \frac{3}{4}(C_{ij}^k)^2 + \sum_{l=1}^6\frac{1}{4}\left(C_{li}^j+C_{lj}^i\right)^2-\sum_{l=1}^6C_{li}^iC_{lj}^j.
$$
We hence build Table \ref{table:1}.

\begin{center}
\begin{table}[h!]
     \begin{tabular}{ | l | l | c | c | c |}
     \hline
     $\mathrm{i}$ & $j$ & $k$ & $C_{ij}^k$ & $K_{ij}$ \\ \hline\hline
     $1$ & $2$ & $\mathrm{diag}(\mathrm{i},-\mathrm{i},0)$ & $1/x$ & $1/x$\\ \hline
     $1$ & $3$ & $5$ & $-\frac{\sqrt{z}}{2\sqrt{xy}}$ & $-\frac{3}{16}\frac{z}{xy} + \frac{1}{8x} + \frac{1}{8y} + \frac{1}{16}\frac{(x-y)^2}{xyz}$\\ \hline
     $1$ & $4$ & $6$ & $-\frac{\sqrt{z}}{2\sqrt{xy}}$ & $-\frac{3}{16}\frac{z}{xy} + \frac{1}{8x} + \frac{1}{8y} + \frac{1}{16}\frac{(x-y)^2}{xyz}$ \\ \hline
     $1$ & $5$ & $3$ & $\frac{\sqrt{y}}{2\sqrt{xz}}$ & $-\frac{3}{16}\frac{y}{xz} + \frac{1}{8x} + \frac{1}{8z} + \frac{1}{16}\frac{(z-x)^2}{xyz}$ \\ \hline
     $1$ & $6$ & $4$ & $\frac{\sqrt{y}}{2\sqrt{xz}}$ & $-\frac{3}{16}\frac{y}{xz} + \frac{1}{8x} + \frac{1}{8z}+\frac{1}{16}\frac{(z-x)^2}{xyz}$ \\ \hline
     $2$ & $3$ & $6$ & $\frac{\sqrt{z}}{2\sqrt{xy}}$ & $-\frac{3}{16}\frac{z}{xy} + \frac{1}{8x} + \frac{1}{8y} + \frac{1}{16}\frac{(y-x)^2}{xyz}$ \\ \hline
     $2$ & $4$ & $5$ & $-\frac{\sqrt{z}}{2\sqrt{xy}}$ & $-\frac{3}{16}\frac{z}{xy} + \frac{1}{8x} + \frac{1}{8y} + \frac{1}{16}\frac{(y-x)^2}{xyz}$ \\ \hline
     $2$ & $5$ & $4$ & $\frac{\sqrt{y}}{2\sqrt{xz}}$ &   $-\frac{3}{16}\frac{y}{xz} + \frac{1}{8x} + \frac{1}{8z} + \frac{1}{8}\frac{(z-x)^2}{xyz}$ \\ \hline
     $2$ & $6$ & $3$ & $-\frac{\sqrt{y}}{2\sqrt{xz}}$ & $-\frac{3}{16}\frac{y}{xz} + \frac{1}{8x} + \frac{1}{8z} + \frac{1}{16}\frac{(z-x)^2}{xyz}$\\ \hline
     $3$ & $4$ & $\mathrm{diag}(\mathrm{i},0,-\mathrm{i})$ & $1/y$ & $1/y$ \\ \hline
     $3$ & $5$ & $1$ & $-\frac{\sqrt{x}}{2\sqrt{yz}}$ & $-\frac{3}{16}\frac{x}{yz} + \frac{1}{8y} +  \frac{1}{8z}+\frac{1}{16}\frac{(y-z)^2}{xyz}$ \\ \hline
     $3$ & $6$ & $2$ & $ \frac{\sqrt{x}}{2\sqrt{yz}}$ & $-\frac{3}{16}\frac{x}{yz} + \frac{1}{8y} +  \frac{1}{8z}+\frac{1}{16}\frac{(y-z)^2}{xyz}$ \\ \hline
     $4$ & $5$ & $2$ & $-\frac{\sqrt{x}}{2\sqrt{yz}}$ &$-\frac{3}{16}\frac{x}{yz} + \frac{1}{8y} +  \frac{1}{8z}+\frac{1}{16}\frac{(y-z)^2}{xyz}$ \\ \hline
     $4$ & $6$ & $1$ & $-\frac{\sqrt{x}}{2\sqrt{yz}}$ & $-\frac{3}{16}\frac{x}{yz} + \frac{1}{8y} +  \frac{1}{8z}+\frac{1}{16}\frac{(y-z)^2}{xyz}$ \\ \hline
     $5$ & $6$ & $ \mathrm{diag}(0,\mathrm{i},-\mathrm{i})$ & $1/z$ & $1/z$ \\ \hline
     \end{tabular}
     \caption{Structure Constants and Sectional curvature of the basis' elements \label{table:1}}
     \end{table}
     \end{center}

    In the next section, we shall recall the concept of positive intermediate Ricci curvature, further using the content in Table \ref{table:1} to provide a metric with $\Ricci_d >0$ for $d = 1, 2, 3, 4,5$ in $\mathrm{SU}(3)/\mathrm{T}^2$.
\section{Intermediate positive Ricci curvature}

\label{sec:intermediate}

\subsection{Preliminaries}

Here, we follow the good description in \cite{lm}. For the forthcoming definitions, there is no widely used notation/terminology. In particular, $d$-positivity of the Ricci tensor (Definition \ref{def:minimun}) is denoted by $\Ricci_d>0$ in \cite{crowley2020intermediate}. See \cite[Section 2.2]{DGM} or \cite[p. 5]{crowley2020intermediate} for further information.

\begin{definition}
Given a point $p$ in a Riemannian manifold $(M,\ga)$, and a collection $v,v_1,\ldots,v_{d}$ of orthonormal vectors in $T_pM$, the $d^{\mathrm{th}}$-intermediate Ricci curvature at $p$ corresponding to this choice of vectors is defined to be $\Ricci_d(v) = \sum_{i=1}^{d} K(v,v_i),$ where $K$ denotes the sectional curvature of $\ga$.
\end{definition}

\begin{definition}[$d\mathrm{th}$-intermediate positive Ricci curvature]\label{def:lm}
    We say that a Riemannian manifold $(M,\ga)$ has positive $d\mathrm{th}$-Ricci curvature if for every $p\in M$ and every choice of non-zero $d+1$-vectors $\{v,v_1,\ldots,v_d\}$ where $\{v_1,\ldots,v_d\}$ can be completed to generated an orthonormal frame in $T_pM$, it holds $\Ricci_d(v) > 0$.
\end{definition}

It is remarkable that for an $n$-dimensional manifold, these curvatures interpolate between positive sectional curvature and positive Ricci curvature for $d$ ranging between $1$ and $n-1$. Quoting \cite{lm}, the quantity presented in Definition \ref{def:lm} has been called ``$d\mathrm{th}$-intermediate Ricci curvature'', ``$d\mathrm{th}$-Ricci curvature'', ``$k$-dimensional partial Ricci curvature'', and ``$k$-mean curvature''.

Another here-considered notion of \emph{intermeidate curvature condition} is
\begin{definition}\label{def:minimun}
Let $M$ be a $n$-dimensional Riemannian manifold and let $d\leq n$. We say that the Ricci tensor of $M$ is \emph{$d$-positive} if the sum of the $d$ smallest eigenvalues of the Ricci tensor is positive at all points.
\end{definition}

\begin{remark}\label{rem:interpolates}
It is worth pointing out that if $d$ ranges from $1,\ldots,n$, the condition given by Definition \ref{def:minimun} interpolates between positive Ricci curvature and positive scalar curvature.
\end{remark}

Much work has been appearing concerning Definition \ref{def:lm}, and recent attention to this subject can be noticed. For a complete list of references on the subject, we recommend \cite{lm}. However, here we chose to explicitly cite some works we have been paying more attention to when dealing with this subject:  \cite{kennard2023positive, davide, reiser2022positive, reiser2022intermediate, Mouille2022}.

Indeed, part of the idea to this note was conceived looking to the examples approached in \cite{davide}. These were built in \cite{DGM} and provide metrics of intermediate positive Ricci curvature (in the sense of Definition \ref{def:lm}) on some \emph{generalized Wallach spaces}. In \cite{davide}, the authors show that these conditions are not preserved under the homogeneous Ricci flow. Their analyses closely follow the techniques developed in \cite{BW}.

We remark that once the Ricci curvature formula is given by 
\begin{align*}
\Ricci(X) = \sum_{\mathrm{i}=1}^nK(X,X_i),
\end{align*}
straightforward computations from Table \ref{table:1} leads to (compare with equation 2.1 in \cite{davide})
\begin{align}\label{eq:ricci01}
\Ricci(X_1) = \Ricci(X_2) =&\frac{1}{2x} + \frac{1}{12}\left(\frac{x}{y z}-\frac{z}{x y}-\frac{y}{x z}\right),\\
\Ricci(X_3) = \Ricci(X_4) =&\frac{1}{2y} + \frac{1}{12}\left(-\frac{x}{y z}-\frac{z}{x y}+\frac{y}{x z}\right) ,\\  
\Ricci(X_5) = \Ricci(X_6)=&\frac{1}{2z}+ \frac{1}{12}\left(-\frac{x}{y z}+\frac{z}{x y}-\frac{y}{x z}\right).\label{eq:ricci02}
\end{align}

Next, we derive expressions for the intermediate curvature notions presented in Definitions \ref{def:lm}, \ref{def:minimun}.

\subsection{On the intermediate positive Ricci curvatures of left-invariant metrics on $\mathrm{SU}(3)/\mathrm{T}^2$}

Our approach to deriving explicit formulae for the curvature notions introduced in the former section is combinatorial. The possibility of parameterizing all curvature notions in terms of the metric describing parameter $(x,y,z)$ plays a significant role in proving our results.

We take advantage of Table \ref{table:1} considering the symmetries appearing on the expressions for sectional curvature to get a simplified description of $d\mathrm{th}$-intermediate positive Ricci curvature (recall the Definition \ref{def:lm}). Fix $X_i$ on the basis of $\lie m$ and take $d$-vectors out of $\{X_1,\ldots,X_6\}$ for $1 \leq d\leq 5$. We have
\begin{align*}
    \Ricci_d(X_i) = a_i\frac{1}{x} + b_i\left(-\frac{3}{16}\frac{z}{xy} + \frac{1}{8x} + \frac{1}{8y}+\frac{1}{16}\frac{(x-y)^2}{xyz}\right) + c_i\left(-\frac{3}{16}\frac{y}{xz} + \frac{1}{8x} + \frac{1}{8z} + \frac{1}{16}\frac{(z-x)^2}{xyz}\right),~i=1,2,\\
    \Ricci_d(X_i) = a_i\frac{1}{y} + b_i\left(-\frac{3}{16}\frac{z}{xy} + \frac{1}{8x} + \frac{1}{8y} + \frac{1}{16}\frac{(y-x)^2}{xyz}\right) + c_i\left(-\frac{3}{16}\frac{x}{yz} + \frac{1}{8y} +  \frac{1}{8z}+\frac{1}{16}\frac{(y-z)^2}{xyz}\right),~i = 3,4,\\
    \Ricci_d(X_i) = a_i\frac{1}{z} + b_i\left(-\frac{3}{16}\frac{y}{xz} + \frac{1}{8x} + \frac{1}{8z} + \frac{1}{16}\frac{(z-x)^2}{xyz}\right) + c_i\left(-\frac{3}{16}\frac{x}{yz} + \frac{1}{8y} +  \frac{1}{8z}+\frac{1}{16}\frac{(y-z)^2}{xyz}\right),~i = 5,6
\end{align*}
for $a_i \in \{0,1\}$ and $b_i, c_i \in \{0,1,2\}$ satisfying $a_i + b_i + c_i = d\leq 5$.

 As it can be checked, $$\Ricci_d\left(\sum_{i=1}^6x^iX_i\right) = \sum_{i=1}^6(x^i)^2\Ricci_d(X_i),$$ so that to ensure the existence of some $1\leq d \leq 5$ with positive $\Ricci_d$ curvature, it suffices to find such a $d$ constrained as: for any $a_i\in \{0,1\},~ b_i, c_i \in \{0,1,2\}$ with $a_i+b_i+c_i = d$
 it holds $\Ricci_d(X_i) > 0$ for some $(x, y, z) = \ga$. Therein we take $z = 1 - x - y$. Taking in account Equation \eqref{eq-compon-metr}, we abuse the notation and denote an invariant metric on $\mathrm{SU}(3)/\mathrm{T}^2$ by $\ga=(x,y,z)$. Next, we focus on \emph{Riemannian submersion metrics} since it suffices for our purposes, considering $x = y = t,~z = 1-2t$. 
\begin{proposition}\label{prop:riccinterd}
    Any invariant metric $\ga = (t,t,1-2t)$ in $\mathrm{SU}(3)/\mathrm{T}^2$ satisfies
    \begin{equation*}
  \Ricci_{d}>0,~d = 1, 2, 3, 4~\text{for}~ t\in ]\tfrac{3}{10}, \tfrac{1}{2}[
\end{equation*}
and for $d=1, 2, 3$ and $t=\tfrac{3}{10}$ there exists a non-zero $X\in T_o\mathrm{SU}(3)/\mathrm{T}^2$ such that $\Ricci_d(X) = 0$.
\end{proposition}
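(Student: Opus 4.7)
The plan is to substitute the submersion ansatz $x = y = t$, $z = 1 - 2t$ into Table \ref{table:1} and read the proposition off the resulting rational functions. Since $x = y$, every $(x - y)^2$ numerator vanishes; and after putting the $K_{15}$-type expression over the common denominator $16 t^2 z$, the identity $(z - t)^2 + 2tz - t^2 = z^2$ collapses it to a single fraction. One obtains
\begin{align*}
K_{12} &= K_{34} = \tfrac{1}{t}, \qquad K_{56} = \tfrac{1}{1-2t}, \\
K_{13} &= K_{14} = K_{23} = K_{24} = \tfrac{10t-3}{16 t^2}, \\
K_{15} &= K_{16} = K_{25} = K_{26} = K_{35} = K_{36} = K_{45} = K_{46} = \tfrac{1-2t}{16 t^2}.
\end{align*}
Every right-hand side is strictly positive on $(0, 1/2)$ except the $K_{13}$-type, which vanishes precisely at $t = 3/10$ and is strictly positive on $(3/10, 1/2)$.

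For the first assertion, fix $t \in (3/10, 1/2)$. All three sectional-curvature types are then positive, so the parameterization of Section \ref{sec:intermediate} writes each $\Ricci_d(X_i)$ as a nonnegative integer combination of positive quantities, hence strictly positive for every admissible $(a_i, b_i, c_i)$ with $a_i + b_i + c_i = d$. Propagating through the diagonal identity $\Ricci_d(\sum x^i X_i) = \sum (x^i)^2 \Ricci_d(X_i)$ then gives $\Ricci_d(X) > 0$ for every nonzero $X$ and every admissible frame, establishing $\Ricci_d > 0$ for $d = 1, 2, 3, 4$.

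For the boundary claim at $t = 3/10$, the simultaneous vanishing $K_{13} = K_{14} = K_{23} = K_{24} = 0$ is the mechanism. Taking $X = X_1$ with the orthonormal frame $\{X_3\}$ yields $\Ricci_1(X_1) = K_{13} = 0$; the frame $\{X_3, X_4\}$ yields $\Ricci_2(X_1) = K_{13} + K_{14} = 0$, handling $d = 1, 2$. The step I expect to require the most care is the $d = 3$ case: within the basis, any orthonormal triple orthogonal to some $X_i$ must include a vector from the positive-curvature blocks $\{X_2, X_5, X_6\}$, contributing $K_{12} = 10/3$ or $K_{15} = 5/18$, so no basis combination achieves the zero. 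To close this case one must leave the basis, exploiting the $\mathrm{Ad}(\mathrm{T}^2)$-equivariance of the Riemann tensor (which forces a weight-matching vanishing of most components $R_{ijkl}$) and searching for a vector $X \in \mathrm{span}(X_1, X_3)$ whose Jacobi quadratic form $v \mapsto g(R(X, v)v, X)$ on $X^\perp$ admits a three-dimensional isotropic subspace at $t = 3/10$. This is the only point where the combinatorics of Table \ref{table:1} must be supplemented by the finer block-diagonal structure of the curvature tensor.
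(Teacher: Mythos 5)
Your computation of the three sectional--curvature values under the submersion ansatz is correct: $K_{12}=K_{34}=1/t$, $K_{56}=1/(1-2t)$, the $\mathfrak{m}_1$--$\mathfrak{m}_2$ planes give $(10t-3)/(16t^2)$, and every mixed plane involving $\mathfrak{m}_3$ gives $(1-2t)/(16t^2)$. Your argument for the first assertion --- each admissible $\Ricci_d(X_i)$ is a nonnegative integer combination $a_iK_{12}+b_iK_{13}+c_iK_{15}$ of three quantities that are all positive on $(3/10,1/2)$ --- is exactly the paper's reduction via the functions $f_d,g_d$, only organized more transparently (and it inherits the same implicit reliance on the reduction to basis vectors and basis frames that the paper makes). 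The boundary cases $d=1,2$, obtained from $K_{13}=0$ and $K_{13}+K_{14}=0$ at $t=3/10$, coincide with the paper's choice $a_i=c_i=0$, $b_i=d$.

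The genuine gap is the $d=3$ boundary case, which you correctly flag but do not close. Your diagnosis is right: any orthonormal triple of basis vectors in $X_1^\perp$ must include a vector contributing $K_{12}=10/3$ or $K_{15}=5/18$ at $t=3/10$, so the minimum of $\Ricci_3$ over basis data there is $2K_{13}+K_{15}=5/18>0$, and no basis combination can vanish. In fact this exposes an error in the paper's own proof of this sub-case: it takes $(a_i,b_i,c_i)=(0,2,1)$ and asserts $f_3(3/10)=0$, but $f_3(t)=-3b_i+c_i+2(8a_i+5b_i-c_i)t=-5+18t$ vanishes at $t=5/18$, not at $t=3/10$ (one checks $f_d(3/10)=(24a_i+2c_i)/5$, which is zero only when $a_i=c_i=0$, forcing $b_i=d\le 2$). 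So the natural threshold for $d=3$ along basis combinations is $t=5/18$, and the stated claim at $t=3/10$ is not established by either argument. Your proposed repair --- producing a non-basis $X$ whose Jacobi operator at $t=3/10$ has three eigenvalues summing to zero --- is only a strategy: it requires the full curvature tensor (off-diagonal components $R(X,X_i,X,X_j)$ that Table \ref{table:1} does not record), it is not carried out, and there is no evidence it can succeed. As written, the $d=3$ part of the second assertion remains unproven in your proposal, and the paper's proof of it is incorrect.
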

\begin{proof}

Considering the simplification imposed by $x = y = t$ (and so $z = 1-2t$) and normalizing appropriately since we are only considering $t < 1/2$ (because we need $z > 0$) one gets up multiplying by a positive function
\begin{align*}
  \Ricci_d(X_i) &= ((1{-}2 t) (-3 b_i + c_i + 2 (8a_i + 5 b_i - c_i) t)),~i=1,2&\\
  \Ricci_d(X_i) &= ((1{-}2 t) (-3 b_i + c_i + 2 (8a_i + 5 b_i - c_i) t)),~i=3,4 &\\
 \Ricci_d(X_i) &= b_i + c_i - 4 (b_i + c_i) t + 4 (4 a_i + b_i + c_i) t^2,~i=5,6
\end{align*}
for any $a_i\in \{0,1\},~ b_i, c_i \in \{0,1,2\}$ with $a_i+b_i+c_i = d$. Defining 
\begin{align*}
    (1-2t)^{-1}\Ricci_d(X_i) &:= f_d(t) = -3 b_i + c_i + 2 (8a_i + 5 b_i - c_i)t,~i=1,2,3,4&\\
    \Ricci_d(X_i) &:= g_d(t) = (d-a_i)(1-4t)+4(2a_i+d)t^2,~i=5,6,
\end{align*}
by directly varying the possibilities for $a_i, b_i, c_i$ assuming $8a_i+5b_i-c_i>0$ one gets that $f_d, g_d$ are positive simultaneously for every $t\in ]\frac{3}{10},\frac{1}{2}[$.

Now we consider $8a_i+5b_i \leq c_i$. Since $c_i \leq 2$ for every $i$, we get a contradiction if $a_i$ or $b_i$ are non-zero. Therefore, we must assume $a_i = b_i = 0$ and so $c_i = d \leq 2$. We get
\begin{align*}
    f_d(t) &= -2dt+d\\
    g_d(t) &= -4d+8dt
\end{align*}
and so $f_d(t) > 0$ if, and only if, $t<\tfrac{1}{2}$ and $g_d(t) > 0$ if, and only if, $t>\tfrac{1}{4}$. Hence, $f_d,~g_d$ are positive for $t\in ]\tfrac{1}{4},\tfrac{1}{2}[.$ Combining the information for $8a_i+5b_i > c_i,~8a_i+5b_i \leq c_i$ we get that for $d=1,2,3,4$ we have intermediate positive Ricci curvature for the metric $\ga = (t,t,1-2t)$ for any $t\in ]\tfrac{3}{10},\tfrac{1}{2}[$.

Finally, for $t=\tfrac{3}{10}$ and $d< 3$, picking $a_i=c_i=0$ we have $b_i=d$ and get
\[f_d(\tfrac{3}{10}) = 0.\]
For $d=3$ we pick $a_i=0, b_i=2, c_i=1$ so
\[f_d(\tfrac{3}{10})=0. \qedhere\]
\end{proof}

\section{The Ricci flow on $\mathrm{SU}(3)/\mathrm{T}^2$ does not preserve positive Ricci, sectional, and some intermediate Ricci curvatures}

\subsection{The Ricci flow on $\mathrm{SU}(3)/\mathrm{T}^2$}

In their work (\cite{BW}, Remark 3.2), B\"ohm and Wilking claim that there exists an invariant metric with positive Ricci curvature on the flag manifold $\mathrm{SU}(3)/\mathrm{T}^2$ that evolves under the homogeneous Ricci flow to a metric with mixed Ricci curvature. In this section, we provide explicit examples of invariant metrics that satisfy B\"ohm and Wilking claim for the \emph{backward homogeneous Ricci flow} instead by analyzing the global behavior of the homogeneous Ricci flow of $\mathrm{SU}(3)/\mathrm{T}^2$. We carry out this analysis using the {\em projected Ricci flow}, which was recently introduced in \cite{proj-ricci-flow}. Before doing so, we make a quick recall (aiming self-containing) on the analyses developed in \cite{proj-ricci-flow}.

\subsubsection{The very basics on the homogeneous Ricci flow on flag manifolds}
We recall that a family of Riemannian metrics $\mathsf{g}(t)$ in $M$ is called a Ricci flow if it satisfies
\begin{equation}
\label{ricci-flow}
\frac{\partial \mathsf{g}}{ \partial t}=-2\Ricci(\mathsf{g}).
\end{equation}

For any compact $n$-dimensional homogeneous space $M^n=G/K$ with connected isotropy subgroup $K$, a $G$-invariant metric $\ga$ on $M$ is determined by its value $\mathsf{g}_o$ at the origin $o=eK$, which is a $\Ad_G(K)$-invariant inner product. Just like $\ga$, the  Ricci tensor $\Ricci (\ga)$ and the scalar curvature $\mathrm{S}(\ga)$ are also $G$-invariant and completely determined by their values at $o$, $\Ricci(\ga)_o = \Ricci(\ga_o)$, $\mathrm{S}(\ga)_o=\mathrm{S}(\ga_o)$. Taking this into account, the Ricci flow equation (\ref{ricci-flow}) becomes the autonomous ordinary differential equation known as the (non-normalized) \textit{homogeneous Ricci flow}:
\begin{equation}
\label{inv-ricci-flow}
\frac{d\ga_t}{dt}=-2\Ricci(\ga_t).
\end{equation}

Recalling equation \eqref{eq-compon-metr} and \eqref{eq-compon-ricci} one derives that the Ricci flow (\ref{inv-ricci-flow}) becomes the autonomous system of ordinary differential equations
\begin{equation}
\label{eq-ricci-flow-coords}
\frac{dx_k}{dt}= -2 y_k, \qquad \ k=1,\ldots , n.
\end{equation}

It is always very convenient to re-write the Ricci flow equation in terms of the {Ricci operator} $r(\ga)_t$, which is possible since $r(\ga)_t$ is invariant under the isotropy representation and hence
$r(\ga)_t|_{\mathfrak{m}_k}$ is a multiple $r_k$ of the identity. From  \eqref{eq-compon-metr} and  \eqref{eq-compon-ricci}, we get
$$
y_k = x_k r_k
$$
and equation \eqref{eq-ricci-flow-coords} becomes
\begin{equation}
\label{eq-ricci-flow-final}
\frac{dx_k}{dt}= -2 x_kr_k.
\end{equation}

Following \cite{bwz}, we could also consider the flow below, which preserves the metrics with unit volume and is the gradient flow of $\ga\mapsto \mathrm{S}(\ga)$ when restricted to such space:
\begin{equation} \label{normaliz-ricci-flow}
 \frac{d\ga_t}{dt}=-2\left(\Ricci(\ga_t) - \frac{\mathrm{S}(\ga_t) }{n}{\ga}_t \right)
 \end{equation}
 As a variation of the above, we can re-normalize equation \eqref{inv-ricci-flow} by choosing a hypersurface in the (finite-dimensional) space of homogeneous metrics, which is transversal to the semi-lines $\lambda\mapsto \lambda \ga$. In \eqref{normaliz-ricci-flow}, the hypersurface consists of unit volume metrics and is unbounded. 

  Denoting by $R(x_1, \ldots, x_n)$ the vector field on the right-hand side of \eqref{eq-ricci-flow-final}, with phase space $\mathbb{R}_+^n = \{ (x_1, \ldots, x_n) \in  \mathbb{R}: \, x_i > 0 \}$, one can check that $x \in \mathbb{R}_+^n$ corresponds to an  Einstein metric if, and only if, $R(x) = \lambda x$, for some $\lambda > 0$. Furthermore, the homogeneous gradient Ricci flow (\ref{normaliz-ricci-flow}) on invariant metrics then becomes
\begin{equation}
\label{eq-ricci-flow-normaliz-final}
\frac{dx_k}{dt}= -2 x_kr_k - \frac{2}{n} S(x) x_k
\end{equation}
where 
\begin{equation}
\label{eq-scalar-curvature}
S(x) =  \sum_{i=1}^{n} n_i r_i
\end{equation}
is the scalar curvature, $n_k = \dim \lie m_k$.

	\begin{theorem}[Theorem 4.1 in \cite{proj-ricci-flow}]
		\label{thm-rescaling}
		For $x \in \mathbb{R}^n_+$, let $R(x)$ be a vector field, homogeneous of degree $0$ in $x$, and $W(x)$ a positive scalar function, homogeneous of degree $\alpha \neq 0$ in $x$. Suppose that $R(x)$ and $\rho(x) = W'(x) R(x)/\alpha$ are of class $C^1$. 
		Then, the solutions of
		\begin{equation}
		\label{eq-R1}
		\frac{d x }{dt}= R(x)
		\end{equation}
		can be rescaled in space and positively reparametrized in time to solutions of the normalized flow
		\begin{equation}
		\label{eq-R-normalizada}
		\frac{d x }{dt}= R(x) - \rho(x) x, \qquad W(x) = 1
		\end{equation}
		and vice-versa. Furthermore, $R(x) = \lambda x$ with $\lambda \in \mathbb{R}$ and $W(x) = 1$ if, and only if, $x$ is an equilibrium of equation (\ref{eq-R-normalizada}).
	\end{theorem}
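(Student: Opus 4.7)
The plan is to establish the correspondence by a radial rescaling followed by a positive time reparametrization. Given a solution $x(t)$ of $\dot x = R(x)$, I would set $\mu(t) := W(x(t))^{-1/\alpha}$, which is well-defined and positive since $W>0$, and put $\tilde x(t) := \mu(t)\, x(t)$. The homogeneity of $W$ of degree $\alpha$ immediately gives $W(\tilde x) = \mu^\alpha W(x) \equiv 1$, so $\tilde x$ already lies on the prescribed hypersurface.

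The next step is to compute $\dot{\tilde x}$. Differentiating the identity $\mu^{-\alpha} = W(x)$ along the flow and using $R(x) = R(\tilde x)$ (degree-$0$ homogeneity of $R$) together with $W'(x) = \mu^{1-\alpha} W'(\tilde x)$ (degree-$(\alpha-1)$ homogeneity of $W'$), a short algebraic manipulation yields $\dot\mu = -\mu^2 \rho(\tilde x)$. Substituting this into $\dot{\tilde x} = \dot\mu\, x + \mu R(x)$ produces
\[
\dot{\tilde x} \;=\; \mu\bigl[R(\tilde x) - \rho(\tilde x)\, \tilde x\bigr].
\]
The reparametrization $s = s(t)$ defined by $ds/dt = \mu(t) > 0$ is then a legitimate positive time change, and it converts this equation to $d\tilde x/ds = R(\tilde x) - \rho(\tilde x)\,\tilde x$, which is exactly the normalized flow restricted to $\{W = 1\}$.

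For the converse, given $\tilde x(s)$ solving the normalized equation with $W(\tilde x) \equiv 1$, I would set $x(s) := \lambda(s)\, \tilde x(s)$ where $\lambda$ is the positive solution of the linear scalar ODE $\dot\lambda = \lambda\, \rho(\tilde x(s))$ with $\lambda(0)=1$, and reparametrize time by $dt/ds = \lambda(s)$. Running the same homogeneity identities in reverse shows that $x$, viewed as a function of $t$, satisfies $\dot x = R(x)$. The equilibrium statement then follows from Euler's identity $W'(x)\cdot x = \alpha W(x)$: if $R(x) = \lambda x$ and $W(x) = 1$, then $\rho(x) = W'(x)\!\cdot\!(\lambda x)/\alpha = \lambda$, so $R(x) - \rho(x)x = 0$; conversely, any equilibrium of the normalized flow on $\{W=1\}$ automatically satisfies $R(x) = \rho(x) x$, which is of the claimed form with $\lambda = \rho(x)$.

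The main obstacle I anticipate is bookkeeping the homogeneity exponents so that the factor $\mu$ in $\dot{\tilde x} = \mu\bigl[R(\tilde x) - \rho(\tilde x)\tilde x\bigr]$ cancels exactly against the Jacobian of the time change; once that algebra is in place, the $C^1$ hypotheses on $R$ and $\rho$ guarantee existence and uniqueness for the scalar ODEs defining $\mu$ and $\lambda$, so no further analytic subtleties arise.
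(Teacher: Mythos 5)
Your argument is correct and complete: the substitution $\tilde x=\mu x$ with $\mu=W(x)^{-1/\alpha}$, the computation $\dot\mu=-\mu^{2}\rho(\tilde x)$ via the homogeneity of $R$ and $W'$, the positive time change $ds/dt=\mu$, the inverse construction through the scalar ODE for $\lambda$, and the use of Euler's identity $W'(x)\cdot x=\alpha W(x)$ for the equilibrium characterization all check out. Note that the paper at hand only quotes this statement from the reference \cite{proj-ricci-flow} without reproducing a proof; your rescaling-and-reparametrization argument is precisely the standard one used there, so there is nothing further to reconcile.
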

  Such a result is introduced aiming to obtain the limit behavior of the homogeneous Ricci flow exploiting the rationality of $R(x)$. Hence, following their work, we normalize the homogeneous Ricci flow to a simplex and rescale it to get a polynomial vector field. More precisely, denoting by $ \overline x = x_1 + \ldots + x_n$, consider $W(x):= \overline x$ whose level set $W(x) = 1$ in $\mathbb{R}_+^n$ is the open canonical $(n-1)$-dimensional simplex, which is a bounded level hypersurface, in contrast with the unbounded unit-volume hypersurface. 
	
\begin{corollary}[Corollary 4.3 in \cite{proj-ricci-flow}]
		\label{corol-rescaling}
		The solutions of the Ricci flow
		\begin{equation}
		\label{eq-R}
		\frac{d x }{dt}= R(x)
		\end{equation}
		can be rescaled in space and reparametrized in time to solutions of the normalized flow
		\begin{equation}
		\label{eq-R-projetada}
		\frac{d x }{dt}=  R(x) - \overline{R(x)}x, \qquad \overline{x} = 1
		\end{equation}
		and vice-versa, where $x$ is Einstein with $\overline{x} = 1$ if and only if it is an equilibrium of equation (\ref{eq-R-projetada}).
		
		Moreover, there exists a function that is strictly decreasing on non-equilibrium solutions of the normalized flow \eqref{eq-R-projetada}. In particular, the projected Ricci flow does not have non-trivial periodic orbits.
	\end{corollary}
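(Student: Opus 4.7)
The statement is a specialization of Theorem \ref{thm-rescaling}, so the plan is mostly to check the hypotheses for a specific choice of $W$. I would take $W(x) := \overline{x} = x_1 + \cdots + x_n$, which is polynomial, positive on $\mathbb{R}_+^n$, and homogeneous of degree $\alpha = 1$, with gradient $W'(x) = (1,\ldots,1)$. For the vector field $R$ on the right-hand side of \eqref{eq-ricci-flow-final}, each component $R_k(x) = -2\,x_k r_k$ is rational on $\mathbb{R}_+^n$, hence $C^1$. The scaling $\ga \mapsto \lambda\ga$ leaves the Ricci $(0,2)$-tensor invariant, so the Ricci-operator eigenvalues $r_k$ are homogeneous of degree $-1$ in $x$, which forces $R$ to be homogeneous of degree $0$. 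Consequently
\[
\rho(x) \;=\; \frac{W'(x)\,R(x)}{\alpha} \;=\; \sum_{k=1}^{n} R_k(x) \;=\; \overline{R(x)},
\]
and Theorem \ref{thm-rescaling} immediately yields the correspondence between solutions of \eqref{eq-R} and \eqref{eq-R-projetada}, together with the ``if and only if'' characterization of Einstein metrics with $\overline{x} = 1$ as equilibria of the projected flow.

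For the Lyapunov part, I would construct a scale-invariant modification of the scalar curvature. The function $S$ from \eqref{eq-scalar-curvature} is homogeneous of degree $-1$ in $x$, whereas in a fixed $B$-orthonormal basis of each $\mathfrak{m}_k$ the volume density is proportional to $V(x) := \prod_k x_k^{n_k/2}$, which is homogeneous of degree $n/2$. Therefore
\[
\Phi(x) \;:=\; S(x)\,V(x)^{2/n}
\]
is homogeneous of degree $0$ and descends to a smooth function on the open simplex $\{\overline{x}=1\}$. In the homogeneous category, the volume-normalized flow \eqref{normaliz-ricci-flow} is gradient-like for $S$, which is classically strictly increasing off the Einstein locus. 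Since $\Phi$ is scale-invariant and the space rescaling combined with the orientation-preserving time reparametrization produced by Theorem \ref{thm-rescaling} relate the two flows, $-\Phi|_{\{\overline{x}=1\}}$ is strictly decreasing along non-equilibrium orbits of \eqref{eq-R-projetada}, which in particular rules out nontrivial periodic orbits.

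The main obstacle is not the application of Theorem \ref{thm-rescaling}, which is automatic, but the justification of \emph{strict} monotonicity of $\Phi$ off equilibria: one must match the critical set of $\Phi|_{\{\overline{x}=1\}}$ with the equilibria of \eqref{eq-R-projetada}. This rests on the Euler--Lagrange identification of critical points of the unit-volume scalar curvature with Einstein metrics ($\Ricci = \tfrac{S}{n}\ga$), combined with the gradient interpretation of \eqref{normaliz-ricci-flow} in the homogeneous setting. Once these classical facts are available, the remainder of the proof is purely combinatorial.
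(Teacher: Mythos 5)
Your argument is correct, and since the paper offers no proof of this corollary --- it is imported verbatim from \cite{proj-ricci-flow} --- your reconstruction matches the intended one: specializing Theorem \ref{thm-rescaling} to $W(x)=\overline{x}$, $\alpha=1$, so that $\rho(x)=\overline{R(x)}$, gives the projected flow and the Einstein/equilibrium equivalence, and the Lyapunov function is the scale-invariant scalar curvature $S(x)\prod_k x_k^{n_k/n}$ together with the gradient property of \eqref{normaliz-ricci-flow} from \cite{bwz}. Your key observation --- that one must use the degree-$0$ homogeneous modification of $S$ rather than $S$ itself, so that monotonicity transports from the unit-volume slice to the simplex under the positive time reparametrization of Theorem \ref{thm-rescaling} --- is exactly the point on which the argument turns.
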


 Before proceeding, it is important to stress 
  \begin{remark}[The fixed points for the flow] \label{rem:einstein}
    A very important property is that, in our case,  
    the Einstein metrics are precisely the fixed points of the flow of the Ricci system (singularities of the Ricci system) and, from the point of view of dynamics in the plane, no flow line attracted to an Einstein metric achieves it in finite time due to the qualitative nature of these singularities.
\end{remark}

\subsubsection{The projected homogeneous Ricci flow in $\mathrm{SU}(3)/\mathrm{T}^2$}
\label{sec:thericciflowhere}
Recall the isotropy representation of $\mathrm{SU}(3)/\mathrm{T}^2$ decomposes into three irreducible and non-equivalent components:
$$
\mathfrak{m}=\mathfrak{m}_1 \oplus \mathfrak{m}_2 \oplus \mathfrak{m}_3.$$

 The Ricci tensor of an invariant metric $\ga = (x,y,z)$ is also invariant, and its components are given by (recall equations \eqref{eq:ricci01}-\eqref{eq:ricci02}):
 
\begin{eqnarray*}
r_x&=&\frac{1}{2 x} +\frac{1}{12} \left(\frac{x}{y z}-\frac{z}{x y}-\frac{y}{x z}\right),\\ \\
r_y&=&\frac{1}{2 y} + \frac{1}{12} \left(-\frac{x}{y z}-\frac{z}{x y}+\frac{y}{x z}\right), \\ \\ 
r_z&=&\frac{1}{2 z}+ \frac{1}{12} \left(-\frac{x}{y z}+\frac{z}{x y}-\frac{y}{x z}\right)
\end{eqnarray*}
and the corresponding (unnormalized) Ricci flow equation is given by 
\begin{equation}\label{RF-eq}
 x^\prime = -2x r_x, \quad y^\prime = -2y r_y, \quad z^\prime = -2z r_z.   
\end{equation}

The projected Ricci flow is obtained by a suitable reparametrization of the time, getting an induced system of ODEs with phase-portrait on the set
$$
\{(x,y,z)\in \mathbb{R}^3: x+y+z=1  \}\cap {\bb R}^3_+,
$$
where $\mathbb{R}^3_+=\{ (x,y,z)\in \mathbb{R}^3: x>0,y>0,z>0  \}$, following of the projection on the $xy-$plane. The resulting system of ODEs is dynamically equivalent to the system \eqref{RF-eq} (Corollary \ref{corol-rescaling}).

Applying the analysis developed in \cite[Section 5]{proj-ricci-flow}, we arrive at the equations of the projected Ricci flow equation (see equation (31) in Section 5 of \cite{proj-ricci-flow}):
\begin{equation}\label{proj-ricci-flow}
    \left\{\begin{array}{l}
    x^{\,\prime} = {u(x,y)},\\
    y^{\,\prime} = {v(x,y)},
    \end{array}\right.
\end{equation}
where $$u(x,y)=2 x \left(x^2 (2-12 y)-3 x \left(4 y^2-6 y+1\right)+6 y^2-6 y+1\right),$$ and $$v(x,y)=-2 y (2 y-1) \left(6 x^2+6 x (y-1)-y+1\right).$$ 

The global dynamics of \eqref{proj-ricci-flow} is described in Figure \ref{fig:fig662}. The remarkable equilibrium points in the projected Ricci flow are (see Theorem 5.1 in \cite{proj-ricci-flow}):
\begin{itemize}
\item $A=(1/4,1/4)$: K\"ahler Einstein metric (saddle),
    \item $B=(1/3,1/3)$: the normal Einstein metric (repeller),
    \item $C=(1/4,1/2)$: K\"ahler Einstein metric (saddle),
    \item $D=(1/2,1/4)$: K\"ahler Einstein metric (saddle).
\end{itemize}

\begin{center}
\begin{figure}
    \centering
    \includegraphics[height=7.5cm]{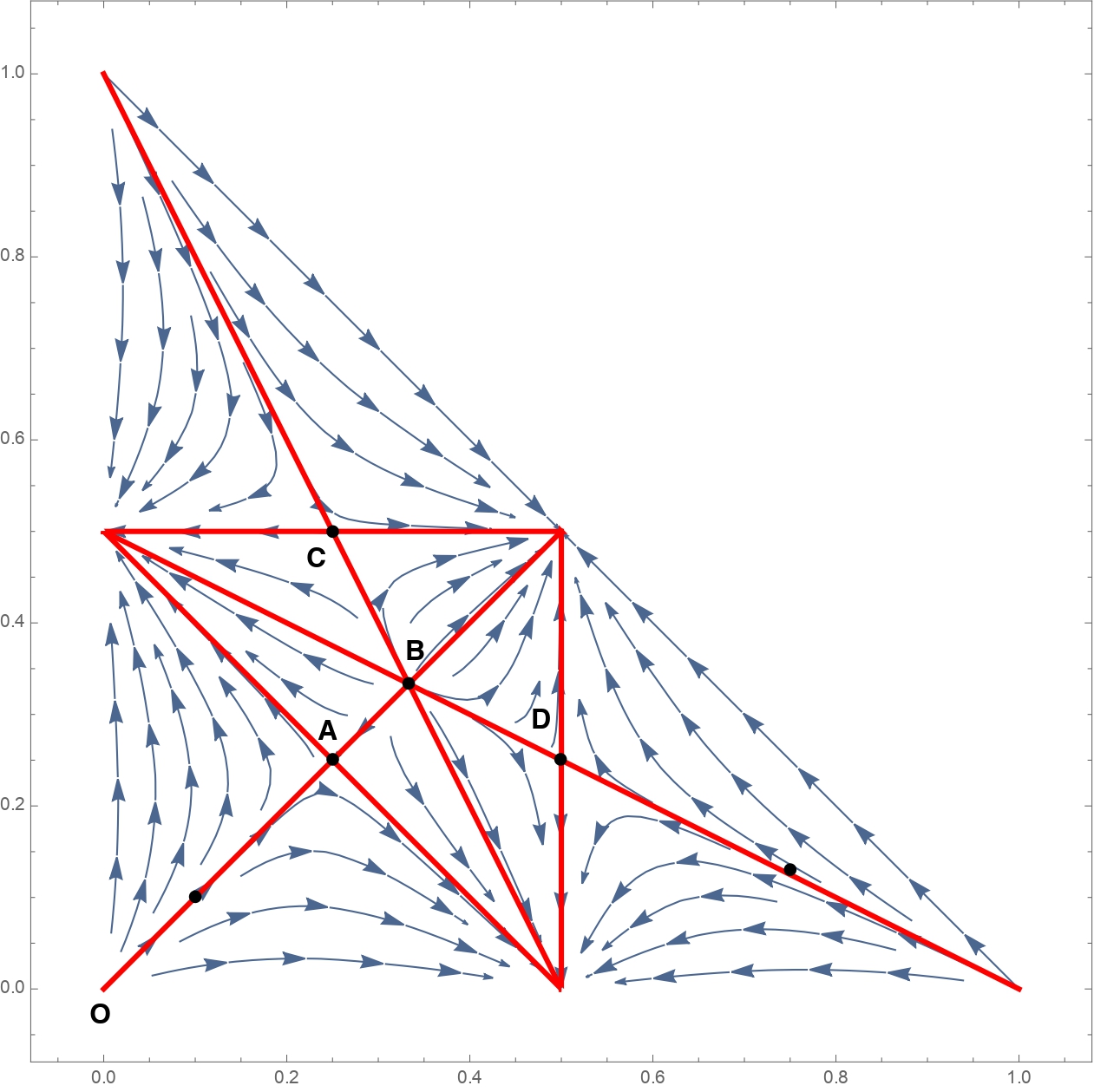}
    \caption{Phase portrait of the Projected Ricci Flow for $\mathrm{SU}(3)/\mathrm{T}^2$.}
    \label{fig:fig662}
\end{figure}
\end{center}

To proceed with our analysis, we start with the following remark:
\begin{lemma}\label{lem:invariantsegment}
  The segment $OA$, where $O=(0,0)$, is invariant by the flow of \eqref{proj-ricci-flow}. Hence, the segment $OA$ solves the projected Ricci flow.   
\end{lemma}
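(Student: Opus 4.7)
The plan is to parametrize the segment $OA$ by $(x,y) = (t,t)$ with $t \in [0,1/4]$, and verify that the vector field $(u,v)$ is tangent to the diagonal at every point of the segment. Since this tangency amounts to the single scalar identity $u(t,t) = v(t,t)$, invariance will follow once I check that identity and confirm that the endpoints are equilibria.

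First I would substitute $y = x = t$ into the expressions for $u$ and $v$. On the $u$-side, the inner factor becomes
\[
t^2(2-12t) - 3t(4t^2-6t+1) + 6t^2-6t+1 = -24t^3 + 26t^2 - 9t + 1,
\]
so that $u(t,t) = 2t(-24t^3 + 26t^2 - 9t + 1)$. On the $v$-side, the bracket $6x^2 + 6x(y-1) - y + 1$ evaluated at $x=y=t$ simplifies to $12t^2 - 7t + 1$, giving $v(t,t) = -2t(2t-1)(12t^2-7t+1)$. A direct expansion shows that both expressions factor as
\[
u(t,t) = v(t,t) = -2t(2t-1)(3t-1)(4t-1).
\]
This is the key computation; nothing beyond polynomial arithmetic is required, so I do not expect any real obstacle here.

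Once the identity $u(t,t)=v(t,t)$ is in hand, the diagonal line $\{x=y\}$ is invariant by the standard tangency criterion: the curve $\varphi(t) = (t,t)$ satisfies $\varphi'(t) \parallel (u,v)|_{\varphi(t)}$, so by uniqueness of solutions of \eqref{proj-ricci-flow}, any trajectory meeting $\{x=y\}$ lies entirely in it. To localize to the segment $OA$, observe that the common value $-2t(2t-1)(3t-1)(4t-1)$ vanishes precisely at $t = 0, 1/4, 1/3, 1/2$; in particular both endpoints $O = (0,0)$ and $A=(1/4,1/4)$ are equilibria. Uniqueness then prevents any trajectory starting in the open segment from reaching an endpoint in finite time, so the closed segment $OA$ is invariant, and on its interior the sign of the factor (positive on $(0,1/4)$) shows that trajectories move from $O$ toward $A$. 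This concludes the argument.
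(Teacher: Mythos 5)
Your proof is correct and follows essentially the same route as the paper: checking that the vector field is tangent to the diagonal along $\gamma(t)=(t,t)$, which (since you verify $u(t,t)=v(t,t)$) is exactly the paper's computation of $\left(u(\gamma(t)),v(\gamma(t))\right)\cdot(-1,1)=0$. Your extra factorization $u(t,t)=-2t(2t-1)(3t-1)(4t-1)$ correctly identifies the equilibria at $t=0,\tfrac14,\tfrac13,\tfrac12$ and the direction of flow on the open segment, which the paper only establishes later and uses in Theorem \ref{thm:basiccase}.
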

\begin{proof}
    The segment $OA$ is supported in the line $\gamma(t)=(t,t)$ with normal vector $w=(-1,1)$. We have
    \begin{eqnarray*}
        \left(u(\gamma(t)),v(\gamma(t))\right)\cdot w &=& (-1)(2 t \left(-24 t^3+26 t^2-9 t+1\right)) + (1)(2 t \left(-24 t^3+26 t^2-9 t+1\right))\\
        &=& 0,
    \end{eqnarray*}
    and therefore the segment $OA$ is solution of the system \eqref{proj-ricci-flow}.
\end{proof}

Our second remark is about the Ricci curvature under the segment $OA$. 
\begin{lemma}\label{lema-sinal-ricci}
    The segment $OA$ contains invariant metrics with positive Ricci curvatures and invariant metrics with mixed Ricci curvatures.
\end{lemma}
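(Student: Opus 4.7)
The plan is to convert the qualitative claim about the segment $OA$ into an elementary one-variable sign analysis, exploiting the fact that along $OA$ we have $x=y=t$ and, since we are on the simplex $x+y+z=1$, $z=1-2t$ with $t\in(0,1/4]$. Under this symmetry, the Ricci eigenvalues in \eqref{eq:ricci01}--\eqref{eq:ricci02} collapse dramatically: $\Ricci(X_1)=\Ricci(X_2)=\Ricci(X_3)=\Ricci(X_4)$ because the substitution $x=y$ makes the two mixed terms $x/(yz)$ and $y/(xz)$ equal and leaves only the single contribution $-(1-2t)/(12t^2)$ correcting the $1/(2t)$ term, while $\Ricci(X_5)=\Ricci(X_6)$ picks up two positive contributions.

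First I would substitute $x=y=t$, $z=1-2t$ into the three Ricci expressions and simplify, obtaining one explicit rational function $\varphi(t)$ for the four-fold repeated eigenvalue on $\lie m_1\oplus\lie m_2$ and another explicit rational function $\psi(t)$ for the two-fold repeated eigenvalue on $\lie m_3$. A short computation shows $\varphi(t)$ is a constant multiple of $(8t-1)/t^{2}$ and $\psi(t)$ is a sum of two manifestly positive terms on $(0,1/2)$, hence positive throughout $(0,1/4]$.

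Second, I would read off the sign behavior of $\varphi$: it vanishes at $t=1/8$, is strictly negative for $t\in(0,1/8)$ and strictly positive for $t\in(1/8,1/4]$. Since $\Ricci_o$ is diagonal in the chosen basis, its eigenvalues are exactly $\varphi(t)$ (multiplicity four) and $\psi(t)$ (multiplicity two). Therefore for every $t\in(1/8,1/4]$ both $\varphi(t)>0$ and $\psi(t)>0$, giving an invariant metric on $OA$ with positive Ricci curvature; while for every $t\in(0,1/8)$ we have $\varphi(t)<0<\psi(t)$, giving an invariant metric on $OA$ with mixed Ricci curvature. This exhibits both classes of metrics as required.

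I do not expect a significant obstacle here: the symmetry $x=y$ forces cancellations that reduce the argument to checking the sign of two one-variable rational functions on an interval, and the passage from the projected coordinate $(t,t)\in OA$ back to the metric $(t,t,1-2t)$ is exactly the rescaling allowed by Theorem \ref{thm-rescaling} and Corollary \ref{corol-rescaling}, so no information about signs of Ricci eigenvalues is lost when lifting from the projected flow to the original homogeneous Ricci flow.
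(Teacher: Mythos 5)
Your proposal is correct and follows essentially the same route as the paper: restrict to $x=y=t$, $z=1-2t$, reduce the Ricci eigenvalues to the one-variable functions $\varphi(t)=\tfrac{8t-1}{12t^{2}}$ (multiplicity four) and $\psi(t)=\tfrac{1}{3(1-2t)}+\tfrac{1-2t}{12t^{2}}>0$ (multiplicity two), and read off the sign change of $\varphi$ at $t=1/8$. The paper's proof is exactly this computation (stated with $r_x,r_y,r_z$ rather than with the explicit factorization), so there is nothing to add.
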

\begin{proof}
    Since the flow is projected in the $xy$-plane, we start to lift the segment $\gamma(t)=(t,t)$, $0<t<1/2$, to the plane $x+y+z=1$. The corresponding segment is parametrized by $\Tilde{\gamma}(t)=(t,t,1-2t)$. The components of the Ricci tensor along the segment $\Tilde{\gamma}$ are given by:
    \begin{eqnarray*}
        r_x(\Tilde{\gamma}(t))&=&\frac{1}{2 t}-\frac{1-2 t}{12 t^2},  \\
        r_y(\Tilde{\gamma}(t))&=&\frac{1}{2 t}-\frac{1-2 t}{12 t^2},  \\
        r_z(\Tilde{\gamma}(t))&=& \frac{1}{12} \left(\frac{1-2 t}{t^2}-\frac{2}{1-2 t}\right)+\frac{1}{2 (1-2 t)}.
    \end{eqnarray*}
    A straightforward computation shows that $r_z(\Tilde{\gamma}(t))>0$ for $0<t<1/2$. We also have $r_x(\Tilde{\gamma}(t))>0$, $r_y(\Tilde{\gamma}(t))>0$ for $1/8<t<1/2$, and $r_x(\Tilde{\gamma}(t))<0$, $r_y(\Tilde{\gamma}(t))<0$ for $0<t<1/8$   
\end{proof}

\begin{remark}
One can rescale the family of metrics $(t,t,1-2t)$ in order to obtain the family of metrics $(1,1,\frac{1-2t}{t})$, $0<t<1/2$. Such a family appears as deformation of the normal metric $(1,1,1)$ in the direction of fibers of the homogeneous fibration $\mathbb{S}^2\to \mathrm{SU}(3)/\mathrm{T}^2\to \mathbb{CP}^2$. 
\end{remark}

\begin{theorem}\label{thm:basiccase}
    There exists an invariant metric on $\mathrm{SU}(3)/\mathrm{T}^2$ with positive Ricci curvature that evolves into an invariant metric of mixed Ricci curvature under the (backward) homogeneous Ricci flow. 
\end{theorem}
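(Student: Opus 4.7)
The plan is to combine the two lemmas just established to produce the desired orbit on the invariant segment, then transfer the conclusion back to the unnormalized Ricci flow via Corollary \ref{corol-rescaling}.

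First I would pick a base point $\tilde\gamma(t_0)=(t_0,t_0,1-2t_0)$ on the lifted segment with $t_0\in(1/8,1/4)$. By Lemma \ref{lema-sinal-ricci}, such a metric has $r_x(\tilde\gamma(t_0)),r_y(\tilde\gamma(t_0)),r_z(\tilde\gamma(t_0))>0$, so $\mathsf{g}_0:=\tilde\gamma(t_0)$ has positive Ricci curvature. By Lemma \ref{lem:invariantsegment}, the segment $OA$ is invariant under the projected flow \eqref{proj-ricci-flow}, so both the forward and backward orbits through $\gamma(t_0)=(t_0,t_0)$ stay on this segment.

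Next I would analyze the one-dimensional dynamics induced on the segment. Evaluating along $\gamma(t)=(t,t)$, the equations \eqref{proj-ricci-flow} reduce to the scalar ODE $\dot t = u(t,t)=2t(-24t^3+26t^2-9t+1)$, and a direct factorization (using that $t=1/4,1/3,1/2$ are roots of the cubic, corresponding to the equilibria $A$, $B$ and the boundary) gives
\begin{equation*}
\dot t \;=\; -48\,t\bigl(t-\tfrac14\bigr)\bigl(t-\tfrac13\bigr)\bigl(t-\tfrac12\bigr).
\end{equation*}
For $t\in(0,1/4)$ this expression is strictly positive, so the forward projected flow moves monotonically toward $A$, while the backward projected flow is strictly monotone decreasing in $t$, with $O=(0,0)$ as its $\omega$-limit (in backward time). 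Hence, starting from $t_0\in(1/8,1/4)$, the backward orbit exists for all backward time and, in finite backward time, crosses every value $t_1\in(0,t_0)$. Choosing any $t_1\in(0,1/8)$ yields a projected metric $\tilde\gamma(t_1)$ for which, again by Lemma \ref{lema-sinal-ricci}, $r_x(\tilde\gamma(t_1))=r_y(\tilde\gamma(t_1))<0$ while $r_z(\tilde\gamma(t_1))>0$, i.e.\ mixed Ricci curvature.

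Finally I would invoke Corollary \ref{corol-rescaling} to pass from the projected picture back to the unnormalized Ricci flow \eqref{RF-eq}: the backward solution of \eqref{proj-ricci-flow} emanating from $\gamma(t_0)$ corresponds, after a positive time reparametrization and a radial rescaling $\lambda\mathsf{g}$, to a backward solution of the original homogeneous Ricci flow issuing from a positive multiple of $\mathsf{g}_0$. Since multiplying a metric by a positive scalar multiplies each Ricci eigenvalue by a positive scalar, the signs of the Ricci components are preserved by the rescaling, so the unnormalized backward orbit starts in positive Ricci and reaches mixed Ricci in finite backward time, proving the theorem. I expect the only delicate point to be the justification of the equivalence in the \emph{backward} direction: one must check that the reparametrization supplied by Theorem \ref{thm-rescaling} is monotone with positive speed along the segment, so that finite backward projected time corresponds to finite backward unnormalized time; this follows because $\rho(x)=\overline{R(x)}$ is smooth and the orbit stays in the open simplex away from $O$ throughout the finite interval under consideration.
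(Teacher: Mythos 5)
Your proposal is correct and follows essentially the same route as the paper: restrict to the invariant segment $OA$ (Lemmas \ref{lem:invariantsegment} and \ref{lema-sinal-ricci}), start at $t_0\in(1/8,1/4)$, and run the backward flow until $t<1/8$. In fact your explicit factorization $\dot t=-48\,t(t-\tfrac14)(t-\tfrac13)(t-\tfrac12)$ makes rigorous the monotonicity that the paper's proof only asserts ``by continuity,'' and your check that the time reparametrization of Corollary \ref{corol-rescaling} preserves finiteness in the backward direction is a useful detail the paper leaves implicit.
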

\begin{proof}
    The natural choice for the initial metric $\ga_0$ with positive Ricci curvature is the K\"ahler-Einstein metric $(1/4,1/4)$ ($\sim \,$ metric determined by $ (1,1,2)$). However, since this metric is a fixed point of the system \eqref{proj-ricci-flow} (Remark \ref{rem:einstein}), we start with a metric that is arbitrarily close to the K\"ahler-Einstein metric, namely $\ga_0=(t_0,t_0,1-2t_0)$, where $1/8<t_0<1/4$. According to Lemma \ref{lema-sinal-ricci}, we have $\Ricci \ga_0 >0$. The solution of the projected Ricci flow with initial condition $\ga_0$ is contained in the segment $OA$, and it is attracted to the K\"ahler-Einstein metric. On the other hand, if we consider the backward solution, by continuity, there exists $T>-\infty$ such that $\ga_T=(t_T,t_T,1-2t_T)$ with $t_T<1/8$ has mixed Ricci curvature, by Lemma \ref{lema-sinal-ricci}.
\end{proof}

\begin{remark}[Some metrics with positive Ricci curvature do not evolve to mixed Ricci curvature metric]
    In complete analogy to Theorem C in \cite{BW}, one could ask whether any metric with positive Ricci curvature on $\mathrm{SU}(3)/\mathrm{T}^2$ develops Ricci curvature with a mixed sign under the Ricci flow. One observes that taking the path of metrics $\ga = (\h,t,\h-t)$ we have that it corresponds to a solution for the projected Ricci flow for which through all $t\in ]0,\h[$ the path has positive Ricci curvature. This can be easily checked by observing that $(1,0)\cdot (u(\h,t),v(\h,t)) = 0$, so the path yields a solution for the projected Ricci flow, and using the explicit equations for $r_x, r_y, r_z$ along $\Tilde\gamma(t):= (\h,t,\h-t)$ to verify the claim.
\end{remark}

Observe now that considering the normal metric $(1,1,1)$, we have positive sectional curvature on both fiber and base of $S^2\rightarrow \mathrm{SU}(3)/\mathrm{T}^2\rightarrow \mathbb{CP}^2$. The family of metrics $(1,1,(1-2t)/t),~t\in ]0,1/2[$ is a canonical variation of $(1,1,1)$. According to Theorem A in \cite{reiser2022positive}, one can find $\tau > 0$ such that for any $t\leq \tau$, we have $\Ricci_{d} > 0$ for any $d\geq 5$, but this is just ordinary positive Ricci curvature. In this manner,  Theorem A in \cite{reiser2022positive}, despite the fascinating examples built of manifolds with intermediate positive Ricci curvature, cannot recover Proposition \ref{prop:riccinterd}. We point out, however, that Theorem A in \cite{reiser2022intermediate} is sharp in the sense that, under their very general hypotheses, one cannot hope to reach a stronger conclusion. In particular, if one considers a trivial bundle, a canonical variation does not improve the intermediate Ricci curvature, meaning one only has $\Ricci_d > 0$ for the values of $d$ given in their Theorem A. Our result illustrates that under certain conditions, one can find metrics on bundles that satisfy stronger positive intermediate curvature conditions than the one given in Theorem A of \cite{reiser2022intermediate}.\footnote{We kindly thank L. Mouillé for pointing it out.}

\begin{theorem}\label{thm:basiccase2}
There exists an invariant metric in $\mathrm{SU}(3)/\mathrm{T}^2$ with $\Ricci_d > 0$ for $d = 1, 2, 3$, that evolves into an invariant metric for such $\Ricci_d(X) \leq 0$ for some non-zero $X$ under the homogeneous Ricci flow. 
\end{theorem}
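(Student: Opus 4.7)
The plan is to mirror the strategy of Theorem \ref{thm:basiccase}, but now using the \emph{forward} direction of the homogeneous Ricci flow. The invariance computation carried out in Lemma \ref{lem:invariantsegment} is a pointwise identity that remains valid on the whole open line $\gamma(t)=(t,t)$, $t\in]0,1/2[$, and not only on the segment $OA$. Under the lift to the simplex, this line becomes $\Tilde\gamma(t)=(t,t,1-2t)$, which is exactly the family of Riemannian submersion metrics whose intermediate Ricci behavior is catalogued by Proposition \ref{prop:riccinterd}.

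First, I would choose an initial metric $\ga_0=(t_0,t_0,1-2t_0)$ with $t_0\in]3/10,\,1/2[$, placed \emph{past} the K\"ahler-Einstein equilibrium $A=(1/4,1/4)$ in the projected phase space. By Proposition \ref{prop:riccinterd}, $\ga_0$ satisfies $\Ricci_d>0$ for every $d\in\{1,2,3,4\}$. Next, I would analyze the induced one-dimensional dynamics by restricting the vector field of \eqref{proj-ricci-flow} to $y=x$. A direct expansion factors the restriction as
\begin{equation*}
u(t,t)=2t(1-2t)(1-3t)(1-4t),
\end{equation*}
which is strictly negative on $]1/4,\,1/2[$. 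Hence the forward projected Ricci flow strictly decreases the $t$-coordinate toward the saddle $A$; by Remark \ref{rem:einstein} the equilibrium is never reached in finite time, so by the intermediate value theorem the trajectory must meet the value $t=3/10$ at some finite time $T>0$.

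At $\ga(T)=(3/10,\,3/10,\,2/5)$, Proposition \ref{prop:riccinterd} provides, for each $d\in\{1,2,3\}$, a non-zero basis vector $X\in\lie m$ with $\Ricci_d(X)=0\le 0$, which is the desired conclusion. To transfer this to the genuine (unnormalized) homogeneous Ricci flow, I would invoke Corollary \ref{corol-rescaling}: the projected trajectory differs from an actual Ricci flow solution only by a positive rescaling in space and a strictly increasing time reparametrization, and the sign of every $\Ricci_d(X)$ under consideration is invariant under positive scalings of the metric. The only genuinely non-mechanical step is verifying the direction of motion along the invariant line on the side $t>1/4$, which is precisely what the factored form of $u(t,t)$ above makes transparent; everything else is bookkeeping combining Lemma \ref{lem:invariantsegment} with Proposition \ref{prop:riccinterd}.
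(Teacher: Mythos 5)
Your overall strategy is the same as the paper's: start on the invariant line $y=x$ just above the threshold of Proposition \ref{prop:riccinterd}, flow forward toward the K\"ahler--Einstein saddle $A=(1/4,1/4)$, and conclude via that proposition. However, there is a concrete error in the one step you yourself flag as ``the only genuinely non-mechanical'' one. Your factorization $u(t,t)=2t(1-2t)(1-3t)(1-4t)$ is correct, but your sign analysis of it is not: on $]1/4,1/2[$ the factor $(1-4t)$ is negative throughout, while $(1-3t)$ changes sign at $t=1/3$. Hence $u(t,t)<0$ only on $]1/4,1/3[$, and $u(t,t)>0$ on $]1/3,1/2[$ (for instance $u(0.4,0.4)=0.8\cdot 0.2\cdot(-0.2)\cdot(-0.6)>0$). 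This is forced by the phase portrait: $B=(1/3,1/3)$ is the normal Einstein metric and a \emph{repeller}, so trajectories on the invariant line move away from $t=1/3$ in both directions. Consequently, if you pick $t_0\in[1/3,1/2[$ --- which your stated range $t_0\in]3/10,1/2[$ permits, since $1/3>3/10$ --- the forward trajectory either sits at $B$ or increases toward $t=1/2$, never reaches $t=3/10$, and in fact retains $\Ricci_d>0$ for all time by Proposition \ref{prop:riccinterd}. The argument as written therefore fails for part of the claimed initial data.

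The fix is simply to restrict the initial condition to $t_0\in]3/10,1/3[$ (a nonempty interval, since $3/10<1/3$); there $u(t,t)<0$, the trajectory decreases monotonically toward $A$, and your intermediate-value argument hitting $t=3/10$ in finite time goes through. This is essentially what the paper does, choosing $t_0\in]5/16,1/3[$. With that correction your write-up is actually slightly cleaner than the paper's at the endpoint: you land exactly on $t=3/10$, where Proposition \ref{prop:riccinterd} hands you a non-zero $X$ with $\Ricci_d(X)=0$, rather than appealing to $C^2$-continuity near the K\"ahler--Einstein metric. Your remarks on transferring the conclusion from the projected flow to the unnormalized flow (positive time reparametrization plus scale-invariance of the sign of $\Ricci_d$) are correct and worth keeping.
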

\begin{proof}
Let us consider the line segment $(t,t)$ with $t\in ]0,\tfrac{1}{4}[$. Such a segment is invariant for the flow and consists of a complete solution not achieving the K\"ahler-Einstein metric $(1/4,1/4)$ in finite time. Analogously, the same segment for $t\in ]\tfrac{1}{4},\tfrac{1}{3}[$ is invariant by the flow and connect (but not in finite time) the K\"ahler-Einstein metric $A = (1/4,1/4)$ and the normal Einstein metric $B = (1/3,1/3)$.

For $d= 1, 2, 3$ let us take $t_0 \in ]\tfrac{5}{16},\tfrac{1}{3}[$ and $\ga = (t_0,t_0,1-2t_0)$. Observe that such a metric has positive $\Ricci_{d} > 0$ for $d = 1, 2$, accordingly to Proposition \ref{prop:riccinterd}. Since the segment $(t_0,t_0)$ is a solution for the flow for $]1/4,t_0]$ such that its limit for infinite time is the K\"ahler-Einstein metric given by $(1/4,1/4)$; that do not have positive sectional curvature, neither $\Ricci_2, \Ricci_3 > 0$ (Proposition \ref{prop:riccinterd}), one concludes the claim since continuity ensures that starting the Ricci flow for such chosen $t_0$ will lead in future time (that can be chosen to be finite) to a metric close enough in the $\mathrm{C}^2$-topology to such a K\"ahler-Einstein metric, thus not having $\Ricci_2, \Ricci_3 > 0$ nor $\Ricci_1 > 0$. It is worth mentioning that the backward Ricci flow does maintain such properties since this metric is attracted to $B$.  \qedhere
\end{proof}

\section{Family $\mathrm{SU}(m+2p)/\mathrm{S}(\mathrm{U}(m)\times \mathrm{U}(p) \times \mathrm{U}(p))$, with $m\geq p >0$}

One central motivation for this note is the results in \cite{BW}. Their Theorem C shows that the compact manifold $M = \mathrm{Sp}(3)/\mathrm{Sp}(1)\times \mathrm{Sp}(1)\times \mathrm{Sp}(1)$ evolves a specific positively curved metric in a metric with mixed Ricci curvature. Such a space is a homogeneous manifold with two parameters describing homogeneous unit volume metrics. The invariant metric of $\mathrm{Sp}(3)$ induces on $M$ a homogeneous unit volume Einstein metric $\ga_E$ of non-negative sectional curvature.

Associated to such homogeneous space, one can consider the fibration $\mathrm{S}^4\hookrightarrow M\rightarrow \bb{H}P^2$ and combining a \emph{canonical variation} on the Riemannian submersion metric obtained out $\ga_E$ plus scaling; we obtain a curve $\ga_t,~t>1$ of unit volume submersion metrics with positive sectional curvature for which $\ga_0 = \ga_E$.
 
 Up to parametrization, such a curve is a solution to the normalized Ricci flow. A precise analysis of the asymptotic behavior of solutions of the Ricci flow allows the authors to prove for any homogeneous non-submersion initial metric, being close enough to $\ga_2$,
the normalized Ricci flow evolves mixed Ricci curvature.

Considering the former paragraphs, one observes that the analyses employed in this note are similar. We look for particular Einstein metrics and observe the long-time behavior of some parametrized solutions to the (projected) homogeneous Ricci flow. Hence, as a final aim, we furnish similar results to Theorem C in \cite{BW} carrying on the analyses made for $\mathrm{SU}(3)/\mathrm{T}^2$ to a family which generalizes it: The ones in $\mathrm{SU}(m+2p)/\mathrm{S}(\mathrm{U}(m)\times \mathrm{U}(p) \times \mathrm{U}(p))$, with $m\geq p >0$. 

As the Ricci operator $r(\ga)$ is diagonalizable, one can infer from \cite{sakane99} that its eigenvalues are of multiplicity two and given by
$$
\begin{array}{lcl}
\vspace{0.3cm}r_x&=&\displaystyle \frac{1}{2 x}+\frac{p}{4 (m+2 p)} \left(\frac{x}{y z}-\frac{z}{x y}-\frac{y}{x z}\right),\\
\vspace{0.3cm}r_y&=&\displaystyle\frac{1}{2 y} + \frac{p}{4 (m+2 p)} \left(-\frac{x}{y z}-\frac{z}{x y}+\frac{y}{x z}\right),\\
\vspace{0.3cm}r_z&=&\displaystyle\frac{1}{2 z}+\frac{m}{4 (m+2 p)} \left(-\frac{x}{y z}+\frac{z}{x y}-\frac{y}{x z}\right).
\end{array}
$$
We intend to apply the concept provided by Definition \ref{def:minimun}. Namely, we prove
\begin{theorem}\label{thm:generalfamily}
    For each $t\in ]1/8,1/3[$ the Riemannian metric $\ga = (t,t,2-t)$ has $d$-positive Ricci tensor (Definition \ref{def:minimun}) for $d\in \left\{1,\ldots,4mp + 2p^2\right\}$. Moreover, the backward homogeneous Ricci flow loses this property in finite time.
\end{theorem}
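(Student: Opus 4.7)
The approach is to mirror, for the larger family, the structure of Theorems \ref{thm:basiccase} and \ref{thm:basiccase2}, with the Sakane-type eigenvalue expressions recalled just above playing the role of Table \ref{table:1}. The proof splits into an invariance statement for a one-dimensional family of metrics, an explicit eigenvalue positivity check along that family, and a backward-dynamics argument.

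\textbf{Step 1: invariance of the curve.} I would first verify that the plane $\{x = y\}$ is invariant under the Ricci flow. This is immediate from the manifest symmetry $r_x(x,x,z) = r_y(x,x,z)$ in the eigenvalue expressions, which forces $x' = y'$ along \eqref{eq-ricci-flow-coords} whenever $x = y$. Projecting to the simplex via Corollary \ref{corol-rescaling}, the corresponding line remains invariant under the projected Ricci flow, so up to positive rescaling and time reparametrization the family $\ga(t) = (t, t, 2-t)$ parametrizes a solution on this line; this is the direct analogue of Lemma \ref{lem:invariantsegment}.

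\textbf{Step 2: positivity of the Ricci eigenvalues.} Substituting $x = y = t$, $z = 2-t$ into the given formulas yields $r_x(t) = r_y(t)$ and an explicit rational expression for $r_z(t)$. Because $r_x$ and $r_y$ coincide on the curve, the Ricci operator has only two distinct eigenvalues on this family, namely $r_x(t)$ with multiplicity $4mp$ (coming from $\mathfrak{m}_1 \oplus \mathfrak{m}_2$) and $r_z(t)$ with multiplicity $2p^2$ (coming from $\mathfrak{m}_3$). A short algebraic check, using $m \geq p \geq 1$ to control the coefficients, shows that both $r_x(t) > 0$ and $r_z(t) > 0$ on $t \in \, ]1/8, 1/3[$. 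Since all Ricci eigenvalues are then strictly positive, each partial sum of the smallest $d$ of them is positive, which yields $d$-positivity of the Ricci tensor in the sense of Definition \ref{def:minimun} for every $d \in \{1, \ldots, 4mp + 2p^2\}$.

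\textbf{Step 3: loss of positivity under the backward flow.} Restricted to the invariant curve, the Ricci flow reduces to an autonomous one-dimensional ODE in $t$. Its equilibria are exactly the Einstein metrics of the family lying on $\{x = y\}$, in analogy with the fixed points $A$, $B$, $C$, $D$ of Section \ref{sec:thericciflowhere}. I would choose $t_0 \in \, ]1/8, 1/3[$ inside the forward basin of attraction of the K\"ahler-Einstein analogue of the point $A$; by smoothness of the reduced vector field and the absence of any intervening fixed point between $t_0$ and the positivity boundary, the reversed orbit reaches the level $r_x = 0$ in finite time and continues past it into the region where $r_x < 0$, which destroys $1$-positivity and consequently $d$-positivity for every $d \leq 4mp$.

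I expect Step 3 to be the main obstacle: it requires pinning down the locations and one-dimensional stability types of the Einstein metrics on $\{x=y\}$ for general values of $(m, p)$, so as to select a $t_0$ whose backward orbit genuinely escapes the positivity window in finite time rather than being trapped at an interior equilibrium. Once this is in hand, Steps 1 and 2 reduce to direct algebraic verifications modelled closely on Lemma \ref{lema-sinal-ricci} and its proof.
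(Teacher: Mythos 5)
Your proposal follows essentially the same route as the paper: the invariant line $x=y$ (the paper's segment $OK$), an explicit positivity check of the Ricci eigenvalues along the lifted curve, and a backward-flow argument on the invariant segment. Two remarks. First, in Step 2 you should work with the simplex-normalized curve $\tilde\gamma(t)=(t,t,1-2t)$ rather than $(t,t,2-t)$; the latter (a typo in the statement) is \emph{not} projectively the same curve, and with $z=2-t$ one finds $r_x>0$ only for $t>\tfrac{2p}{2m+5p}$, which for $m=p$ equals $2/7>1/8$, so the claimed positivity on $]1/8,1/3[$ would fail. With $z=1-2t$ the paper gets $r_x,r_y>0\iff t>\tfrac{p}{2m+6p}$ and $\tfrac{p}{2m+6p}\le\tfrac18$ for $m\ge p$, together with $r_z>0$ on $]0,1/2[$ (Lemma \ref{lem:region1}); your multiplicity count ($4mp$ for the coinciding eigenvalues $r_x=r_y$ and $2p^2$ for $r_z$) is correct, and then every partial sum of eigenvalues is positive.

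Second, the obstacle you flag in Step 3 is real, and the paper's own proof does not resolve it: it only rules out the K\"ahler--Einstein point $t^*=\tfrac{m+p}{2(m+2p)}\ge\tfrac13$ and appeals to ``continuity''. On the invariant line one has $u(t,t)=-t(1-2t)(1-4t)\bigl((2m+4p)t-(m+p)\bigr)$, so there is an additional interior equilibrium at $t=1/4$, which lies \emph{inside} $]1/8,1/3[$. A sign check gives $u(t,t)<0$ on $]1/4,t^*[$, so for $t_0\in\,]1/4,1/3[$ the backward orbit increases monotonically toward $t^*$ and remains in the positivity window for all time; the finite-time loss occurs only for $t_0\in\,]1/8,1/4[$, where $u(t,t)>0$ and the backward orbit decreases past $t=\tfrac{p}{2m+6p}$ in finite time because the vector field has no zero on $[\tfrac{p}{2m+6p},t_0]$. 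So if you carry out the equilibrium/stability analysis you sketch, you will obtain a complete proof, but only for initial data in $]1/8,1/4[$ (equivalently, the theorem should be read as an existence statement, as in Theorem \ref{thm:basiccase}, rather than a claim for every $t\in\,]1/8,1/3[$).
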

With this aim, we proceed once more by describing the projected homogeneous Ricci flow as (see equation (31) in \cite[Section 5]{proj-ricci-flow})
$$
\begin{array}{lcl}
\vspace{0.5em}u(x,y)&=&-x (2 x-1) \left(m (4 y-1) (x+y-1)+p \left(x (8 y-1)+8 y^2-7 y+1\right)\right),\\
v(x,y)&=&-y (2 y-1) \left(m (4 x-1) (x+y-1)+p \left(8 x^2+x (8 y-7)-y+1\right)\right).
\end{array}
$$

\begin{lemma}
Let $K=\left(\frac{m+p}{2 (m+2 p)},\frac{m+p}{2 (m+2 p)} \right)$. The segment $OK$ is invariant under the projected Ricci flow. 
\end{lemma}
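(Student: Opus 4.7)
The plan is to adapt the proof of Lemma \ref{lem:invariantsegment} essentially verbatim. Parametrizing the segment $OK$ by $\gamma(t)=(t,t)$ for $t\in[0,\tfrac{m+p}{2(m+2p)}]$, with normal direction $w=(-1,1)$ to the diagonal, invariance amounts to the single identity $u(t,t)=v(t,t)$, i.e.\ to the tangency of the vector field $(u,v)$ to the line $x=y$ at every point of $OK$.

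The key observation is a symmetry of the displayed formulas for $u$ and $v$: a direct inspection shows $u(x,y)=v(y,x)$, so the identity $u(t,t)=v(t,t)$ is automatic and no case analysis is required. A short expansion identifies the common value as
\[
u(t,t)=v(t,t)=-t(2t-1)\bigl[m(4t-1)(2t-1)+p(16t^{2}-8t+1)\bigr],
\]
a polynomial whose coefficients depend only on $m$ and $p$; no hidden cancellations intervene. This proves that the projected Ricci flow vector field is tangent to the line $\{x=y\}$ along the entire segment.

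To confirm invariance of the segment itself rather than of the full diagonal, I also record that the endpoints of $OK$ are not crossed by trajectories. The origin $O$ lies on the boundary of the simplex and so is excluded from the phase space of \eqref{proj-ricci-flow}. For $K$, I solve the quadratic $m(4t-1)(2t-1)+p(16t^{2}-8t+1)=0$ in $t$: its roots turn out to be $t=\tfrac{1}{4}$ and $t=\tfrac{m+p}{2(m+2p)}$, so $K$ is indeed an equilibrium of the projected Ricci flow (consistently with its role as an Einstein metric). Combined with tangency along the diagonal, this confines every trajectory issuing from a point of $OK$ to $OK$. Overall there is no serious obstacle here: the whole argument is the symmetry observation $u(x,y)=v(y,x)$ together with checking that $K$ is an equilibrium; the remaining steps mirror Lemma \ref{lem:invariantsegment} line by line.
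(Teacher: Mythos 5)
Your proof is correct and takes essentially the same approach as the paper: both verify tangency of the vector field to the diagonal by checking $u(t,t)=v(t,t)$ against the normal $w=(-1,1)$, and your factored common value agrees with the paper's expression $-t(1-6t+8t^2)\bigl(m(2t-1)+p(4t-1)\bigr)$ since $1-6t+8t^2=(2t-1)(4t-1)$ and $16t^2-8t+1=(4t-1)^2$. Your extra observations --- the symmetry $u(x,y)=v(y,x)$ and the verification that $K$ is an equilibrium --- are accurate but not needed beyond what the paper records.
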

\begin{proof}
    We need compute $(u(\gamma(t)),v(\gamma(t)))\cdot w$ where $\gamma(t)=(t,t)$ and $w=(-1,1)$. Straightforward computation shows: 
    $$
    u(t,t)=v(t,t)=-t (1 - 6 t + 8 t^2) (m (-1 + 2 t) + p (-1 + 4 t)).
    $$
   Hence, 
\begin{align*}
   (-1,1)\cdot \left(u(\gamma(t)),v(\gamma(t))\right) &= -u(\gamma(t)) + v(\gamma(t))\\
   &= -u(\gamma(t)) + u(\gamma(t))\\
   &= 0. \qedhere
    \end{align*}
\end{proof}

Now consider the lifted curve $\Tilde{\gamma}(t)=(t,t,1-2t)$. We have:

\begin{align}
\label{eq:riccipositivity1} r_1(\Tilde{\gamma(t)})&=\frac{1}{2 t}-\frac{p (1-2 t)}{4 t^2 (m+2 p)},\\
\label{eq:riccipositivity2} r_2(\Tilde{\gamma(t)})&=\frac{1}{2 t}-\frac{p (1-2 t)}{4 t^2 (m+2 p)},\\
\label{eq:riccipositivity3} r_3(\Tilde{\gamma(t)})&= \frac{1}{2 (1-2 t)}+\frac{m}{4 (m+2 p)} \left(\frac{1-2 t}{t^2}-\frac{2}{1-2 t}\right)
\end{align}

Straightforward computation guarantees

$$
\begin{array}{l}
\vspace{0.3cm}r_x(\Tilde{\gamma(t)})>0 \Leftrightarrow t>\dfrac{p}{2 m+6 p},\\
\vspace{0.3cm}r_y(\Tilde{\gamma(t)})>0 \Leftrightarrow t>\dfrac{p}{2 m+6 p},\\
\vspace{0.3cm}r_z(\Tilde{\gamma(t)})>0 \Leftrightarrow 0< t< \dfrac{1}{2}.
\end{array}
$$
We claim that for each $t\in ]1/8,1/3[$ we have $d$-positivity of the Ricci tensor (Definition \ref{def:minimun}) for $d = 1,\ldots, 4mp + 2p^2$ where 
\begin{equation}
4mp + 2p^2 = \dim \mathrm{SU}(m+2p)/\mathrm{S}(\mathrm{U}(m)\times\mathrm{U}(p)\times\mathrm{U}(p)).
\end{equation}
\begin{lemma}\label{lem:region1}
    For each $d\in \{1,\ldots,\dim \mathrm{SU}(m+2p)/\mathrm{S}(\mathrm{U}(m)\times\mathrm{U}(p)\times\mathrm{U}(p))\}$ the Ricci tensor $r(\ga)$ of a metric $\ga = (t,t,1-2t)$ is $d$-positive if $t\in ]1/8,1/3[$. In particular, $t$ does not depend on $(m,p)$, which holds for the entire considered family.
\end{lemma}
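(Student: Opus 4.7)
The heart of the lemma is the observation that $d$-positivity of a symmetric tensor for every $d$ up to the dimension is equivalent to positive-definiteness. Since the Ricci operator $r(\gamma)$ on $\mathrm{SU}(m+2p)/\mathrm{S}(\mathrm{U}(m)\times\mathrm{U}(p)\times\mathrm{U}(p))$ has only three distinct eigenvalues, namely $r_x(\tilde\gamma(t))$, $r_y(\tilde\gamma(t))$, $r_z(\tilde\gamma(t))$ (with multiplicities $2mp$, $2mp$ and $2p^2$, summing to $\dim M = 4mp+2p^2$), the plan is to verify that all three are strictly positive on the interval $t\in\,]1/8,1/3[$, uniformly in the parameters $m\geq p\geq 1$. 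Once this is done, the sum of the $d$ smallest eigenvalues is a positive combination of positive quantities, and $d$-positivity for every $d\in\{1,\ldots,\dim M\}$ follows.

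First, I would use the explicit expressions \eqref{eq:riccipositivity1}--\eqref{eq:riccipositivity3} already computed in the text. The component $r_z(\tilde\gamma(t))$ is handled immediately by the equivalence recorded just above the lemma: it is positive on the whole interval $]0,1/2[$, and in particular on $]1/8,1/3[$. For $r_x(\tilde\gamma(t))=r_y(\tilde\gamma(t))$, the same paragraph yields the equivalence
\[
r_x(\tilde\gamma(t))>0 \;\Longleftrightarrow\; t>\frac{p}{2m+6p}.
\]
Thus the key uniformity check reduces to bounding the threshold $p/(2m+6p)$ above by $1/8$ for all admissible $(m,p)$.

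Next, I would verify that $p/(2m+6p)\leq 1/8$ whenever $m\geq p\geq 1$, with equality precisely when $m=p$. This is an elementary algebraic manipulation: $p/(2m+6p)\leq 1/8$ is equivalent to $8p\leq 2m+6p$, i.e.\ $p\leq m$. Since the inequality is strict for $m>p$ and only tight at $m=p$, the strict lower bound $t>1/8$ ensures $r_x(\tilde\gamma(t))=r_y(\tilde\gamma(t))>0$ for the entire family. Combined with the previous step, all three eigenvalues of $r(\gamma)$ are strictly positive on $]1/8,1/3[$, independently of $(m,p)$.

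The last step is purely formal: since $r(\gamma)$ is positive-definite, the sum of its $d$ smallest eigenvalues (with multiplicities) is positive for every $d\leq\dim M$, which is the defining condition of Definition \ref{def:minimun}. Essentially no obstacle arises; the only subtlety worth flagging is the boundary case $m=p$, which is handled by the strict (rather than non-strict) inequality $t>1/8$ in the statement.
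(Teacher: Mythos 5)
Your proposal is correct and follows essentially the same route as the paper: both reduce the claim to positivity of the three Ricci eigenvalues along $\tilde\gamma(t)=(t,t,1-2t)$ and verify the uniform bound $p/(2m+6p)\leq 1/8$ from $m\geq p$ (your cross-multiplied form $8p\leq 2m+6p\iff p\leq m$ is the same computation as the paper's $1/(2(m/p)+6)\leq 1/8$). The only difference is cosmetic: you spell out explicitly that positive-definiteness of $r(\ga)$ gives $d$-positivity for every $d$ up to the dimension, a step the paper leaves implicit.
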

\begin{proof}
    Since $\frac{m}{p}\geq 1$ one has that 
    \begin{align*}
    \frac{p}{2m+6p} &= \frac{1}{2\left(\tfrac{m}{p}\right)+6}\\
    &\leq \frac{1}{8}.
    \end{align*}
Therefore, according to equations \eqref{eq:riccipositivity1},\eqref{eq:riccipositivity2}, one gets that tf $t> \frac{1}{8}$ then $r_1,r_2 > 0$. On the other hand, $t\in ]0,1/2[$ ensures that $r_3 > 0$ (equation \eqref{eq:riccipositivity3}). Observe, however, that geometrically, we cannot attach the metric for $t\geq 1/3$ since one arises in a KE metric for such a parameter, and that is a fixed point of the system. In this manner, the claim follows for $t\in ]1/8,1/3[$.
    \qedhere
\end{proof}

We finish proving Theorem \ref{thm:generalfamily}.
\begin{proof}[Proof of Theorem \ref{thm:generalfamily}]
We proceed exactly as in the proof of Theorem \ref{thm:basiccase} observing that, similarly to that case, we could be tempted to consider as initial metric the K\"ahler-Einstein metric (on the projected Ricci-flow) given by
$$K=\left(\frac{m+p}{2 (m+2 p)},\frac{m+p}{2 (m+2 p)} \right).$$ Note however that since $\dfrac{m}{p}\geq 1$ we have 
\begin{align*}
    \frac{m+p}{2 (m+2 p)} &= \frac{m+p}{2(m+p)+2p}\\
    &= \frac{1}{2+\tfrac{2p}{m+p}}\\
    &= \frac{1}{2+\tfrac{2}{(m/p)+1}}\\
    &\geq \frac{1}{3}.
\end{align*}

Hence, for $t\in ]1/8,1/3[$, we have that, according to Lemma \ref{lem:region1}, the desired $d$-positivity condition to the Ricci tensor. Moreover, starting the homogeneous backward Ricci flow, the result is ensured by continuity. \qedhere
\end{proof}

\begin{remark}\label{rem:final}
Lastly, we recall that $r(\ga)$ losing $d$-positivity for $d=4mp + 2p^2$ implies losing positive scalar curvature. Hence, direct but cumbersome computations coming from \cite{Goette1999} make it possible to recover that the space of moduli of invariant metrics of positive Ricci curvature on $\mathrm{SU}(m+2p)/\mathrm{S}(\mathrm{U}(m)\times\mathrm{U}(p)\times\mathrm{U}(p))$ has infinitely many path components. This shall appear elsewhere.
\end{remark}

\section*{Acknowledgements}
This paper was written for the Conference ``GS\&MS Meeting proceedings from August 29th to September 2nd, 2022''. The authors gladly acknowledge the organizers who promoted an excellent cooperation and mathematics learning environment. We extend our gratitude to the hospitality of the Universidad Nacional de C\'ordoba. L. F. C. also thanks FAEPEX n. 519.292--2449/22 for funding his participation at the conference.
The São Paulo Research Foundation (FAPESP) supports L. F. C. grant 2022/09603-9, partially supports L. G. grants 2021/04003-0, 2021/04065-6, and partially supports R. M. M. grants 2021/08031-9, 2018/03338-6. The National Council for Scientific and Technological Development (CNPq) supports R. M. M. grants  315925/2021-3 and 434599/2018-2.

The authors gladly acknowledge the anonymous referees for the careful read, leading useful suggestions that improved the quality of the paper, and the editor for well handling the manuscript submission.

	\bibliographystyle{alpha}

	\bibliography{main}

\end{document}